\tikzset{mycolor/.style = {line width=1bp,color=#1}}%
\tikzset{myfillcolor/.style = {draw,fill=#1}}%
\newtheorem{thrm}{Theorem}
\newtheorem{lmm}{Lemma}
\newtheorem{conj}{Conjecture}
\newtheorem{prop}{Proposition}
\newtheorem{cor}{Corollary}
\newtheorem{quest}{Question}{\bfseries}{\rmfamily}
\newtheorem{definition}{Definition}
\newcommand{\floor}[1]{\lfloor #1 \rfloor}
\newcommand{\gen}[1]{\ensuremath{\langle #1\rangle}}
\title{ Characterizing 3-sets  in Union-Closed Families}
\author[1,2]{Jonad Pulaj \thanks{The work for this article has been (partly) conducted within the Research Campus MODAL funded by the German Federal Ministry of Education and Research (BMBF grant number 05M14ZAM).}}
\affil[1]{Department of Computer Science, Mathematics and Physics, Faculty of Science and Technology, The University of the West Indies, Cave Hill, St. Michael, Barbados}
\affil[2]{Dept. of Mathematical Optimization, Zuse Institute Berlin (ZIB)
Takustr. 7, 14195 Berlin, Germany}
\date{}                     
\affil[ ]{\tt jonad.pulaj@cavehill.uwi.edu}
\begin{document}
\maketitle
\begin{abstract}
A family of sets is union-closed (UC) if the union of any two sets in the family is also in the family.  Frankl's UC sets conjecture states that for any  nonempty UC family $\mathcal{F} \subseteq 2^{[n]}$ such that $\mathcal{F} \neq \left\{\emptyset\right\}$, there exists an element $i \in [n]$ that is contained in at least half the sets of $\mathcal{F}$. The 3-sets conjecture of Morris states that the smallest number of distinct 3-sets (whose union is an $n$-set) that ensure Frankl's conjecture is satisfied for any UC family that contains them
is $ \lfloor{n/2\rfloor} + 1$ for all $n \geq 4$. For an UC family $\mathcal{A} \subseteq 2^{[n]}$, Poonen's Theorem characterizes the existence of weights on $[n]$ which ensure all UC families that contain $\mathcal{A}$ satisfy Frankl's conjecture, however the determination of such weights for specific $\mathcal{A}$ is nontrivial even for small $n$. We classify families of 3-sets on $n \leq 9$ using a polyhedral interpretation of Poonen's Theorem and exact rational integer programming. This yields a proof of the 3-sets conjecture.
\end{abstract}
{\bf Keywords:} Frankl’s conjecture, 3-sets conjecture, union-closed families, integer programming, extremal combinatorics.
\section{Introduction}

Frankl's conjecture, also known as the union-closed (UC) sets conjecture, needs little introduction in combinatorial circles. Let $\mathcal{F}$ be a finite family of finite sets and denote by $U(\mathcal{F})$ the union of all sets in $\mathcal{F}$. $\mathcal{F}$ is UC if and only if for every $A,B \in \mathcal{F}$ it follows that $A \cup B \in \mathcal{F}$. Let $[n]:=\left\{1,2,\ldots,n\right\}$ and let $2^{[n]}$ denote the power set of $[n]$. Frankl's conjecture states that for any nonempty UC family $\mathcal{F}$ such that $\mathcal{F} \neq \left\{\emptyset\right\}$, there exists $i \in U(\mathcal{F})$ that belongs to at least half the sets of $\mathcal{F}$. Interest in the problem has increased following Gowers' dedicated polymath blog~\cite{Gowers}, an ongoing project with no solution in sight. 

A $k$-set is a set of cardinality $k$. Let $\mathcal{S}$ be a family of distinct 3-sets, and assume w.l.o.g that $U(\mathcal{S}) = [n]$. The 3-sets conjecture of Morris~\cite{Morris} states that the smallest cardinality of $\mathcal{S}$ such that Frankl's conjecture is satisfied for any UC family $\mathcal{F}\supset \mathcal{S}$ is $\lfloor{n/2\rfloor} + 1$, for all $n \geq 4$. 


The presence of 3-sets in UC families illustrates a well-known approach to Frankl's conjecture, namely the characterization of subfamilies of sets which ensure the conjecture holds for all families that contain them. In particular, any such subfamily ensures Frankl's conjecture is satisfied in an element that belongs to the union of sets of the subfamily. 

Following Vaughan \cite{Vaughan1}, we say that an UC family of sets $\mathcal{A} \subseteq 2^{[n]}$  is \emph{Frankl-Complete} (FC), if and only if for every UC family $\mathcal{F} \supseteq \mathcal{A}$ there exists $i \in [n]$ that is contained in at least half the sets of $\mathcal{F}$. An UC family $\mathcal{A} \subseteq 2^{[n]}$ is \emph{Non\textendash Frankl-Complete} (Non\textendash FC), if and only if there exists an UC family $\mathcal{F} \supseteq \mathcal{A}$ such that each $i \in [n]$ is in less than half the sets of $\mathcal{F}$.  It was noted early on that any UC family which contains a 1-set or a 2-set is an FC-family, as seen for example in Sarvate and Renaud~\cite{sarvate1989union}. Furthermore Sarvate and Renaud~\cite{sarvate1990improved} were the first to exhibit an UC family with 27 sets, which contained a 3-set such that all 3 of its elements were in exactly 13 sets. In this regard, this construction marks the beginning of research efforts on the characterization of FC-families with 3-sets.


Poonen~\cite{Poonen} gave a general characterization of
FC and Non\textendash FC-families, and used it for a short proof that an UC-family with a single 3-set is Non\textendash FC.
For an UC family $\mathcal{A} \subseteq 2^{[n]}$, Poonen's Theorem characterizes the existence of weights on $[n]$ which ensure all UC families that contain $\mathcal{A}$ satisfy Frankl's conjecture.  
However, using Poonen's Theorem to directly characterize specific FC-families is surprisingly difficult, as the theorem gives a constructive proof in the form of a polytope with a potentially exponential number of constraints.

Using stronger conditions than Poonen's Theorem in order facilitate the needed combinatorial analysis,
Vaughan~\cite{Vaughan3} pushed further the investigations on 3-sets whose union is an $n$-set,
and showed that any UC family with more than $\frac{2n}{3}
$ 3-sets is an FC-family. Furthermore
Vaughan~\cite{Vaughan4} announced in a conference meeting an incomplete proof (with more work underway) that any UC-family with more than $\frac{n}{2}$ 3-sets is an FC-family, but unfortunately the finished result never materialized in print, and Vaughan passed away a few years after the announcement.

Given a family of sets $\mathcal{S}$, we say that $\mathcal{S}$ \emph{generates} (or is a \emph{generator} of) $\mathcal{F}$, denoted by $\gen{\mathcal{S}}:=\mathcal{F}$, if and only if $\mathcal{F}$ is an UC family that contains $\mathcal{S}$, and there exists no UC family $\widetilde{\mathcal{F}} \subset \mathcal{F}$ such that $\mathcal{S} \subseteq \widetilde{\mathcal{F}}$. Morris \cite{Morris} introduced the following notion in his work on FC-families. Let $FC(k,n)$ denote the smallest $m$ such that any $m$ of the $k$-sets in $\left\{1,2, \ldots,n\right\}$ generate an FC-family. Morris \cite{Morris} showed that $FC(3,5)=3$, $FC(3,6)=4$, and $FC(3,7)\leq 6$. 

Finally, Mari\'c, \v Zivkovi\'c, and Vu\v ckovi\'c \cite{Serbs} formalized a combinatorial search in the interactive theorem prover Isabelle/HOL and showed that all families containing four 3-subsets of a 7-set are
FC-families. Although not explicitly mentioned in their paper, their result implies that $FC(3,7)=4$ by the lower bound on the number of 3-sets of Morris \cite{Morris}.

The characterization of specific FC-families is difficult, since previously known techniques (including computer-assisted methods, as seen in Morris~\cite{Morris}, Vaughan~\cite{Vaughan3}, and Mari\'c, \v Zivkovi\'c, and Vu\v ckovi\'c \cite{Serbs}) to overcome the difficulties of Poonen's Theorem have been exhausted.

Recently, Pulaj~\cite{PulajThesis,2017arXiv170205947P} developed a cutting plane algorithm that is able to exactly compute FC and Non-FC families using exact integer programming, allowing for the characterization of FC-families of unprecedented size, thus giving new impetus to this line of research. The cutting plane method, or Algorithm~\ref{row generation} in Section~\ref{polyhedral}, is a polyhedral reinterpretation of Poonan's Theorem which when coupled (as done in this work) with exact rational integer programming~\cite{Exact} and certificates of correctness for the branch and bound trees via VIPR~\cite{VIPR} can \emph{safely} verify all previous relevant work on FC-families, and enable the discovery of previously unknown FC-families.

In this work we prove the 3-sets conjecture of Morris~\cite{Morris} by building on previous results and proof-techniques, and closing gaps when necessary with the exact characterization of FC-families with 3-sets via Algorithm~\ref{row generation}. Before we examine the 3-sets conjecture in more detail, in Section~\ref{polyhedral} we review a polyhedral approach to Poonen's Theorem which characterizes if a given UC family $\mathcal{A}$ is FC or non--FC and yields\footnote{Our implementation\label{fnlabel} is freely available at \url{https://github.com/JoniPulaj/cutting-planes-UC-families}} Algorithm~\ref{row generation}. For complete details we refer the reader to Pulaj~\cite{PulajThesis,2017arXiv170205947P}.

\section{A Polyhedral Approach to Poonen's Theorem}\label{polyhedral}
In this section we formally introduce Poonen's Theorem and show a natural connection with polyhedral theory.
 We need the following definitions. For two families of sets $ \mathcal{A}$ and $ \mathcal{B}$, let $ \mathcal{A} \uplus \mathcal{B} := \left\{A \cup B \ | \ A \in \mathcal{A}, B \in \mathcal{B} \right\}$. For $i \in U(\mathcal{F})$ define $\mathcal{F}_i := \left\{ F \in \mathcal{F} \ | \ i \in F \right\}$. To simplify notation  we assume w.l.o.g. that $U(\mathcal{A})=[n]$. 
\begin{thrm}[Poonen 1992] \label{Poonen}
Let $\mathcal{A}$ be an UC family such that $\emptyset \in \mathcal{A}$. The following statements are equivalent:
\vspace{3mm}
 \begin{enumerate}
   \item For every UC family $ \mathcal{F} \supseteq \mathcal{A}$, there exists $i \in [n]$ such that $  |\mathcal{F}_i| \geq |\mathcal{F}|/2$.
\\
 \item There exist nonnegative real numbers $c_1, \ldots , c_n$ with $\sum_{i \in [n]}c_i =1$ such that  for  every UC family $ \mathcal{B} \subseteq 2^{[n]}$ with $\mathcal{B} \uplus  \mathcal{A} = \mathcal{B}$, the following inequality holds

\begin{equation}
\sum_{i\in [n]}c_i|\mathcal{B}_i| \geq |\mathcal{B}|/2 .  \label{poon}
\end{equation}
  \end{enumerate}
\end{thrm}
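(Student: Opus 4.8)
The plan is to prove the two implications separately, and in both directions the key is a linear-programming / LP-duality argument applied to the "local" structure at each element $i \in [n]$. Throughout I will use that an UC family $\mathcal{B}$ with $\mathcal{B} \uplus \mathcal{A} = \mathcal{B}$ is exactly a union-closed family that is "closed under adding sets of $\mathcal{A}$"; such $\mathcal{B}$ are the natural building blocks because any UC $\mathcal{F} \supseteq \mathcal{A}$ can be written (via the sets it contains) in terms of such a $\mathcal{B}$, and conversely $\mathcal{B} \cup \mathcal{A}$-type constructions give UC overfamilies of $\mathcal{A}$.

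\textbf{Direction (2) $\Rightarrow$ (1).} This is the easy direction. Given weights $c_1,\dots,c_n \ge 0$ with $\sum_i c_i = 1$ satisfying \eqref{poon}, let $\mathcal{F} \supseteq \mathcal{A}$ be any UC family. First I would reduce to the case $\emptyset \in \mathcal{F}$ (adding $\emptyset$ changes $|\mathcal{F}|$ and each $|\mathcal{F}_i|$ in a controlled way, or one simply notes $\emptyset\in\mathcal A\subseteq\mathcal F$ already). The crucial observation is that $\mathcal{F}$ itself satisfies $\mathcal{F} \uplus \mathcal{A} = \mathcal{F}$ since $\mathcal{A}\subseteq\mathcal F$ and $\mathcal F$ is UC. Hence applying \eqref{poon} with $\mathcal{B} = \mathcal{F}$ gives $\sum_i c_i |\mathcal{F}_i| \ge |\mathcal{F}|/2$. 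Since the $c_i$ are nonnegative and sum to $1$, they are a convex combination, so $\max_i |\mathcal{F}_i| \ge \sum_i c_i|\mathcal F_i| \ge |\mathcal{F}|/2$, which is exactly statement (1).

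\textbf{Direction (1) $\Rightarrow$ (2).} This is the substantive direction and where I expect the main obstacle. The natural approach is a separating-hyperplane / LP-duality argument. Consider the set of vectors $\{\, (|\mathcal B_1| - |\mathcal B|/2,\dots,|\mathcal B_n| - |\mathcal B|/2) : \mathcal B \text{ UC}, \ \mathcal B \uplus \mathcal A = \mathcal B \,\} \subseteq \mathbb{R}^n$, and let $P$ be its convex hull (or conic hull together with the simplex $\sum c_i = 1$, $c_i\ge 0$). Statement (2) says the simplex of weights meets the polar/dual cone appropriately; if (2) fails, by LP duality (Farkas / the finite-dimensional separation theorem — the family of relevant $\mathcal B$ is finite since $\mathcal B\subseteq 2^{[n]}$) there is a "dual" object, which one shows can be taken to be (a scaling of the incidence structure of) a single UC family $\mathcal F\supseteq\mathcal A$, witnessing that every $i$ lies in fewer than half its sets — contradicting (1). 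The hard part will be setting up the duality so that the dual certificate is genuinely a union-closed family containing $\mathcal A$ (rather than an arbitrary nonnegative combination of incidence vectors): one needs the combinatorial input that nonnegative-integer combinations of the $\mathcal B$'s correspond to disjoint unions / multiplicities of sets that can be realized, up to scaling, inside one honest UC family $\mathcal F = \mathcal B \cup \mathcal A$ for a suitable $\mathcal B$. I would handle this by taking $\mathcal F$ to be the union-closure generated by $\mathcal A$ together with the supports appearing in the optimal dual solution, checking that the averaging inequality forced by (1) contradicts the strict inequality produced by the failure of (2). Establishing that this closure does not spoil the counting estimates — i.e. that passing from the formal dual combination to an actual UC family only helps — is the delicate bookkeeping step.
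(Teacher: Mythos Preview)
The paper does not prove Theorem~\ref{Poonen}; it is stated with attribution to Poonen~\cite{Poonen} and used as a black box. So there is no ``paper's own proof'' to compare against, and I can only evaluate your sketch on its merits.

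Your direction (2)$\Rightarrow$(1) is essentially right, but note a small gap: statement~(1) quantifies over \emph{all} UC $\mathcal{F}\supseteq\mathcal{A}$, not just those with $\mathcal{F}\subseteq 2^{[n]}$, whereas~\eqref{poon} only applies to $\mathcal{B}\subseteq 2^{[n]}$. You cannot simply set $\mathcal{B}=\mathcal{F}$. The standard fix is to pass to the ``trace'' $\mathcal{B}=\{F\cap[n]:F\in\mathcal{F}\}$ and track the fiber sizes $n_B=|\{F\in\mathcal{F}:F\cap[n]=B\}|$; since each fiber is itself closed under taking unions with sets of $\mathcal A$, one can apply~\eqref{poon} in a weighted form (or take many copies and average). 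This is routine but should be said.

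For (1)$\Rightarrow$(2) your LP/Farkas framing is correct and is indeed Poonen's approach, but your proposed construction of the witnessing $\mathcal{F}$ is where the argument actually breaks. Taking ``the union-closure generated by $\mathcal{A}$ together with the supports appearing in the optimal dual solution'' stays inside $2^{[n]}$ and will not in general produce a family in which every $i\in[n]$ is in fewer than half the sets: closing under unions typically \emph{increases} the densities and destroys the strict inequalities. Poonen's construction instead goes \emph{outside} $[n]$: from a rational dual solution one obtains finitely many $\mathcal{B}^{(1)},\dots,\mathcal{B}^{(m)}$ (with multiplicities) each satisfying $\mathcal{B}^{(j)}\uplus\mathcal{A}=\mathcal{B}^{(j)}$, and one builds $\mathcal{F}$ by placing disjoint ``tags'' from a fresh ground set on the sets of each $\mathcal{B}^{(j)}$ so that the copies do not interact under union. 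The resulting $\mathcal{F}$ is UC, contains $\mathcal{A}$, and its $[n]$-incidence counts are exactly the sums $\sum_j|\mathcal{B}^{(j)}_i|$, which by the dual inequality are all strictly below $|\mathcal{F}|/2$. Without introducing new elements (or an equivalent device), the bookkeeping step you flag as ``delicate'' does not go through.
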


A simple observation is to view the theorem through polyhedral theory. Indeed, for a fixed UC family $\mathcal{A}$ such that $\emptyset \in \mathcal{A}$, the second statement in Theorem~\ref{Poonen} becomes a polyhedron defined as the following:
\[  P^{\mathcal{A}}:=\left\{ y \in \mathbb{R}^n \ \vline \ \begin{array}{ll}
         \sum_{i\in [n]}y_i = 1;\\
         \\
         \sum_{i\in [n]}y_i|\mathcal{B}_i| \geq |\mathcal{B}|/2 & \mbox{ $\forall \text{ UC } \mathcal{B} \subseteq 2^{[n]} : \mathcal{B} \uplus \mathcal{A} = \mathcal{B}$};\\
         \\
          y_i \geq 0 & \ \mbox{$\forall i \in [n]$};\end{array} \right\} \] 
Thus Poonen's Theorem says that a given family of sets $\mathcal{A}$ is FC if and only $P^{\mathcal{A}}$ is nonempty. Since $P^{\mathcal{A}}$ can have an exponential number of constraints, we design a cutting plane method to overcome this difficulty.

Fix an UC family $\mathcal{A}$ such that $\emptyset \in \mathcal{A}$. Let $c \in \mathbb{Z}^n_{\geq 0}$ such that $\sum_{i\in [n]}c_i \geq 1$. With every set $S \in 2^{[n]}$, we associate a variable $x_S$, i.e, a component of a vector $x \in \mathbb{R}^{2^n}$ indexed by $S$. Given a family of sets $\mathcal{F} \subseteq 2^{[n]}$, let $\mathcal{X}^{\mathcal{F}} \in \mathbb{R}^{2^n}$ denote the incidence vector of $\mathcal{F}$ defined (component-wise) as 
\[ \mathcal{X}^\mathcal{F}_S := \left\{ \begin{array}{ll}
         1 & \mbox{if $S \in \mathcal{F}$},\\
         0 & \mbox{if $S \not \in \mathcal{F}$}.\end{array} \right. \] Hence every family of sets $\mathcal{F} \subseteq 2^{[n]}$ corresponds to a unique zero-one vector in $\mathbb{R}^{2^n}$ and vice versa. Let
 $X(\mathcal{A},c)$ denote the set of integer vectors contained in the polyhedron defined by the following inequalities:
\begin{align}\
& x_S+x_T \leq 1 + x_{S \cup T} & \tiny{\forall S \in 2^{[n]}, \forall T \in 2^{[n]}} \label{one}\\
&\sum_{S\in 2^{[n]}}\left(\sum_{i \in S}c_i - \sum_{i \notin S}c_i \right)x_S +1 \leq 0 \label{two}\\
& x_S \leq x_{A\cup S} &  \forall S \in  2^{[n]}, \forall A \in \mathcal{A} \label{three}\\
& 0\leq x_S \leq 1 & \forall S\in 2^{[n]} \label{four}
\end{align}\\

\begin{prop}[Pulaj 2017] \label{FranklIntProp}
Let $\mathcal{A}$ be a UC family such that $\emptyset \in \mathcal{A}$, and let $c \in \mathbb{Z}^n_{\geq 0}$ such that $\sum_{i\in [n]}c_i \geq 1$. If $X(\mathcal{A},c) = \emptyset$, then $\mathcal{A}$ is an FC-family.

\end{prop}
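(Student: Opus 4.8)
The plan is to show that $X(\mathcal{A},c)=\emptyset$ forces the second statement of Theorem~\ref{Poonen} to hold for the normalized weight vector $y$ defined by $y_i := c_i/C$, where $C := \sum_{i\in[n]} c_i \ge 1$. Since $y_i \ge 0$ and $\sum_{i\in[n]} y_i = 1$, Theorem~\ref{Poonen} then delivers that $\mathcal{A}$ is FC. So everything reduces to reading off which families of sets the integer points of the system \eqref{one}--\eqref{four} encode.

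First I would record the dictionary between $\{0,1\}$-vectors and families: by \eqref{four}, every integer point $x$ in $X(\mathcal{A},c)$ is a zero-one vector, hence $x = \mathcal{X}^{\mathcal{B}}$ for a unique $\mathcal{B}\subseteq 2^{[n]}$. I would then check, one block of constraints at a time, that the surviving conditions are exactly the structural hypotheses appearing in Poonen's statement. Restricted to zero-one vectors, \eqref{one} says precisely that $S,T\in\mathcal{B}$ implies $S\cup T\in\mathcal{B}$, i.e.\ $\mathcal{B}$ is union-closed; and \eqref{three} says that $S\in\mathcal{B}$ implies $A\cup S\in\mathcal{B}$ for every $A\in\mathcal{A}$, i.e.\ $\mathcal{B}\uplus\mathcal{A}\subseteq\mathcal{B}$, which together with the automatic inclusion $\mathcal{B}\subseteq\mathcal{B}\uplus\mathcal{A}$ (coming from $\emptyset\in\mathcal{A}$) is equivalent to $\mathcal{B}\uplus\mathcal{A}=\mathcal{B}$. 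Thus the zero-one solutions of \eqref{one}, \eqref{three}, \eqref{four} are exactly the incidence vectors of UC families $\mathcal{B}\subseteq 2^{[n]}$ with $\mathcal{B}\uplus\mathcal{A}=\mathcal{B}$.

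Next I would translate \eqref{two}. Using $\sum_{i\in S}c_i-\sum_{i\notin S}c_i = 2\sum_{i\in S}c_i - C$ and $\sum_{S\in\mathcal{B}}\sum_{i\in S}c_i = \sum_{i\in[n]}c_i|\mathcal{B}_i|$, evaluating the left-hand side of \eqref{two} at $x=\mathcal{X}^{\mathcal{B}}$ turns it into $2\sum_{i\in[n]}c_i|\mathcal{B}_i| - C|\mathcal{B}| + 1 \le 0$. Since all quantities here are integers, this is equivalent to $2\sum_{i\in[n]}c_i|\mathcal{B}_i| < C|\mathcal{B}|$, and dividing by $C>0$, to $\sum_{i\in[n]}y_i|\mathcal{B}_i| < |\mathcal{B}|/2$, i.e.\ to the failure of \eqref{poon} for $\mathcal{B}$ with weights $y$. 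Combining with the previous paragraph: $X(\mathcal{A},c)\neq\emptyset$ if and only if there is a UC family $\mathcal{B}\subseteq 2^{[n]}$ with $\mathcal{B}\uplus\mathcal{A}=\mathcal{B}$ that violates \eqref{poon} for the weights $y$. Hence $X(\mathcal{A},c)=\emptyset$ means \eqref{poon} holds for all such $\mathcal{B}$ with weights $y$, which is exactly statement (2) of Theorem~\ref{Poonen}; therefore statement (1) holds and $\mathcal{A}$ is FC.

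I do not anticipate a deep obstacle, as the content is a faithful polyhedral re-encoding of Poonen's condition; the one step needing care is the passage from \eqref{two} to a strict inequality, where integrality of $c$ (hence of every term) is used essentially, together with $C\ge 1$ so that $y$ is a genuine probability vector. A secondary point worth noting is that degenerate choices of $\mathcal{B}$ cause no trouble: $\mathcal{B}=\emptyset$ is automatically excluded because the constant $+1$ makes \eqref{two} fail at $x=0$ (matching the fact that the empty family satisfies \eqref{poon} trivially), and $\mathcal{B}=\{\emptyset\}$ is excluded by \eqref{three} unless $\mathcal{A}=\{\emptyset\}$, which is incompatible with the standing assumption $U(\mathcal{A})=[n]$ together with $\sum_{i\in[n]}c_i\ge 1$.
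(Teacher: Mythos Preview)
Your argument is correct and is exactly the intended one: decode the $\{0,1\}$-vectors satisfying \eqref{one}, \eqref{three}, \eqref{four} as precisely the UC families $\mathcal{B}\subseteq 2^{[n]}$ with $\mathcal{B}\uplus\mathcal{A}=\mathcal{B}$, then observe that \eqref{two} (using integrality of $c$) encodes strict violation of \eqref{poon} for the normalized weights $y=c/C$, so that emptiness of $X(\mathcal{A},c)$ is condition~(2) of Theorem~\ref{Poonen}. Note that the present paper does not itself prove this proposition---it is quoted from~\cite{PulajThesis,2017arXiv170205947P}---but your write-up matches the natural polyhedral reading of Poonen's theorem that underlies the cited result.
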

This leads to the following cutting plane algorithm which can determine whether any arbitrary UC family $\mathcal{A}$ if FC or Non--FC.
\vspace{5mm}

\begin{algorithm}[H]\label{row generation}
 \SetKwInOut{Input}{Input}\SetKwInOut{Output}{Output}
 \Input{A UC family $\mathcal{A}$ such that $U(\mathcal{A})=[n]$ and $\emptyset \in \mathcal{A}$}
 \Output{$\mathcal{A}$ is an FC-family, or $\mathcal{A}$ is a Non\textendash FC-family}
 $H \leftarrow \left( \sum_{i\in [n]}y_i = 1, \ y_i \geq 0 \ \forall i \in [n] \right)$ \DontPrintSemicolon

 \While{$\exists \ \bar{y} \in H$ such that $\bar{y}=(\frac{a_1}{b_1}, \frac{a_2}{b_2}, \ldots, \frac{a_n}{b_n}) \in \mathbb{Q}^n_{\geq 0}$}{
  
   $g \leftarrow lcm(b_1,b_2, \ldots,b_n)$ \; \DontPrintSemicolon
   $c \leftarrow g\bar{y}$ \; \DontPrintSemicolon
  \If {$\exists \ \mathcal{X}^{\mathcal{B}} \in X(\mathcal{A},c)$}{
  $H \leftarrow H \cap \left(\sum_{i\in [n]}y_i|\mathcal{B}_i| \geq |\mathcal{B}|/2 \right)$\

  }
  \Else{
  \Return{} $\ \mathcal{A}$ is an FC-family
  }
}

\Return{}$ \ \mathcal{A}$ is a Non\textendash FC-family\
 \caption{Cutting planes for FC-families}
\end{algorithm}
\vspace{3mm}
\begin{thrm}[Pulaj 2017]
Let $\mathcal{A}$ be an UC family such that $U(\mathcal{A})=[n]$ and $\emptyset \in \mathcal{A}$. Then Algorithm \ref{row generation} correctly determines if $\mathcal{A}$ is an FC-family or Non\textendash FC-family.
\end{thrm}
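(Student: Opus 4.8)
The plan is to show that Algorithm~\ref{row generation} is a faithful cutting‑plane procedure for deciding whether the polyhedron $P^{\mathcal A}$ is nonempty, and then to invoke Poonen's Theorem. Throughout, write $Q^{\mathcal A}$ for the (finite) collection of UC families $\mathcal B\subseteq 2^{[n]}$ with $\mathcal B\uplus\mathcal A=\mathcal B$, so that $P^{\mathcal A}$ is cut out by the simplex constraints $\sum_i y_i=1$, $y\ge 0$ together with one valid inequality $\sum_i y_i|\mathcal B_i|\ge|\mathcal B|/2$ for each $\mathcal B\in Q^{\mathcal A}$.

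\textbf{Step 1 (the key correspondence).} First I would prove that for any $c\in\mathbb Z^n_{\ge 0}$ with $g:=\sum_i c_i\ge 1$, the integer points of $X(\mathcal A,c)$ are exactly the incidence vectors $\mathcal X^{\mathcal B}$ of those $\mathcal B\in Q^{\mathcal A}$ satisfying $2\sum_i c_i|\mathcal B_i|<g|\mathcal B|$. This is just a matter of unwinding \eqref{one}--\eqref{four} on a $0/1$ vector $x=\mathcal X^{\mathcal B}$: inequality \eqref{four} forces $x$ to be $0/1$; inequality \eqref{one} says exactly that $\mathcal B$ is union‑closed; inequality \eqref{three}, together with $\emptyset\in\mathcal A$, says exactly that $\mathcal B\uplus\mathcal A=\mathcal B$; and rewriting the left‑hand side of \eqref{two} as $\sum_{S\in\mathcal B}\bigl(\sum_{i\in S}c_i-\sum_{i\notin S}c_i\bigr)=2\sum_i c_i|\mathcal B_i|-g|\mathcal B|$ shows, by integrality, that \eqref{two} is equivalent to $2\sum_i c_i|\mathcal B_i|<g|\mathcal B|$. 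Since in the algorithm $c=g\bar y$ with $\sum_i\bar y_i=1$, dividing by $g$ turns this into the statement that $\bar y$ violates the $P^{\mathcal A}$‑inequality attached to $\mathcal B$.

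\textbf{Step 2 (loop invariant and soundness).} By Step 1, every constraint the algorithm appends to $H$ is one of the defining inequalities of $P^{\mathcal A}$; since $H$ is initialized to the simplex, which contains $P^{\mathcal A}$, the invariant $P^{\mathcal A}\subseteq H$ is preserved throughout. Hence: if the algorithm returns ``FC'' it did so because $X(\mathcal A,c)=\emptyset$ for an admissible $c$, and Proposition~\ref{FranklIntProp} gives that $\mathcal A$ is FC; if it returns ``Non--FC'' it did so because $H=\emptyset$, so $P^{\mathcal A}=\emptyset$, and by the equivalence of the two statements in Theorem~\ref{Poonen} the family $\mathcal A$ is Non--FC. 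So whatever verdict is produced is correct. For termination, note that $P^{\mathcal A}$ is a rational polyhedron contained in $H$, so whenever $H\neq\emptyset$ one may pick a rational point $\bar y\in H\cap\mathbb Q^n_{\ge 0}$ and the loop body runs as written; and each iteration that does not return appends to $H$ an inequality of $Q^{\mathcal A}$ that was not previously present, because the chosen $\bar y$ (which lies in $H$) violates it by Step 1. Since $|Q^{\mathcal A}|<\infty$, only finitely many iterations can occur before the algorithm returns ``FC'' (some $X(\mathcal A,c)=\emptyset$) or ``Non--FC'' ($H=\emptyset$); combined with the soundness argument, this proves the theorem.

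\textbf{Main obstacle.} The step requiring genuine care is Step~1, and within it the algebraic identification of \eqref{two} with the scaled Poonen inequality: one must track the strict/non‑strict inequalities correctly (using integrality to pass between ``$\le -1$'' and ``$<0$''), handle the rescaling by $g=\sum_i c_i=\operatorname{lcm}(b_1,\dots,b_n)$, and verify that $\mathcal B\uplus\mathcal A=\mathcal B$ is captured by \eqref{three} precisely because $\emptyset\in\mathcal A$ (so that both inclusions $\mathcal B\uplus\mathcal A\subseteq\mathcal B$ and $\mathcal B\subseteq\mathcal B\uplus\mathcal A$ hold). Once this correspondence and the loop invariant $P^{\mathcal A}\subseteq H$ are in place, the remainder is standard cutting‑plane bookkeeping together with a direct appeal to Poonen's Theorem and Proposition~\ref{FranklIntProp}.
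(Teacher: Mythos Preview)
The paper does not actually prove this theorem; it is stated as a result of Pulaj~\cite{PulajThesis,2017arXiv170205947P} and the reader is referred there for details. Your argument is correct and is exactly the standard cutting-plane correctness proof one would expect: Step~1 unpacks constraints \eqref{one}--\eqref{four} to identify $X(\mathcal A,c)$ with the set of $\mathcal B\in Q^{\mathcal A}$ whose Poonen inequality is violated at $\bar y=c/g$; Step~2 maintains the invariant $P^{\mathcal A}\subseteq H$, derives soundness from Proposition~\ref{FranklIntProp} and Theorem~\ref{Poonen}, and obtains termination because each nontrivial iteration adds a genuinely new inequality from the finite set $Q^{\mathcal A}$. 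Your handling of the strict inequality in \eqref{two} via integrality and of the identity $g=\sum_i c_i$ (coming from $\sum_i\bar y_i=1$ and $c=g\bar y$) is correct, as is the observation that a rational polyhedron with no rational point is empty.
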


In the next section we use Algorithm~\ref{row generation} to answer fundamental questions regarding 3-sets in UC families. In particular, we recover and complete a proof attempt on an upper bound for 3-sets 
(which always generate an FC-family) of Vaughan~\cite{Vaughan3} and consequently prove the 3-sets conjecture of Morris~\cite{Morris}. Since we exhibit many families of 3-sets, in order to improve readability, we will not use inner brackets to denote 3-sets in a given family. For example, we denote $\left\{\left\{1,2,3\right\}, \left\{2,3,4\right\}, \left\{3,4,5\right\}\right\}$ as $\left\{123, 234, 345\right\}$. When displaying 3-sets themselves we always use the usual set notation.
\section{3-sets in Union-Closed Families}
As we saw in the introduction, FC-families generated by 3-sets are well-studied and it is natural to wonder how many distinct 3-sets always generate an FC family. We first recall the following definition of Morris~\cite{Morris}.
\begin{definition}
Let $FC(k,n)$ denote the smallest positive integer $m$ such that any $m$ of the $k$-sets in $[n]$ generate an FC-family.
\end{definition}
It is not immediately clear that $FC(k,n)$ is well-defined, but the following result of Gao and Yu \cite{Gao} proves this is always the case for sufficiently large $n$ in relation to $k$.
\begin{thrm}[Gao and Yu 1998]
For all $k \geq 1$  and $n \geq 2k -2$, the UC family $\mathcal{B} \subseteq 2^{[n]}$ generated by  all the $k$-sets in $[n]$ is an FC-family, and therefore $FC(k,n) \leq \binom{n}{k}$.
\end{thrm}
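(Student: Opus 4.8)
\medskip
\noindent\textbf{Proof plan.}
The plan is to establish the substantive assertion --- that $\mathcal{B}$, the UC family generated by all the $k$-sets of $[n]$, is an FC-family --- and then to read off $FC(k,n)\le\binom{n}{k}$, together with the well-definedness of $FC(k,n)$, as an immediate corollary: there are exactly $\binom{n}{k}$ $k$-subsets of $[n]$, so ``any $\binom{n}{k}$ of the $k$-sets in $[n]$'' means all of them, and they generate precisely $\mathcal{B}$.

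First I would describe $\mathcal{B}$ explicitly. A short induction on cardinality shows that closing the $k$-sets under pairwise union produces exactly the sets of size at least $k$: any $S\subseteq[n]$ with $|S|=m+1\ge k+1$ is the union of two distinct $m$-subsets of itself, which lie in the closure by induction (base case $m=k$), while a union of two sets never has smaller cardinality than either. Hence $\mathcal{B}=\{S\subseteq[n]:|S|\ge k\}$, and it suffices to show that every UC family $\mathcal{F}$ with $\mathcal{F}\supseteq\mathcal{B}$ contains some element lying in at least half of its members. Since $\mathcal{B}$ already covers $[n]$, we have $U(\mathcal{F})=[n]$, so any $i\in[n]$ is an admissible witness.

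The main step is an averaging argument over the ground set --- which is exactly the easy implication of Poonen's Theorem applied with the uniform weights $c_i=1/n$. For $i\in[n]$ put $a_i:=|\mathcal{F}_i|-|\mathcal{F}|/2$; it is enough to prove $\sum_{i\in[n]}a_i\ge 0$, since then $\max_i a_i\ge 0$. Double counting incidences gives $\sum_{i\in[n]}a_i=\sum_{F\in\mathcal{F}}|F|-\tfrac n2|\mathcal{F}|=\sum_{F\in\mathcal{F}}\bigl(|F|-\tfrac n2\bigr)$. Writing $\mathcal{F}=\mathcal{B}\cup\mathcal{S}$ with $\mathcal{S}:=\{F\in\mathcal{F}:|F|<k\}$ (this $\mathcal{S}$ absorbs $\emptyset$ should it lie in $\mathcal{F}$), we split
\[
\sum_{F\in\mathcal{F}}\Bigl(|F|-\tfrac n2\Bigr)=\sum_{j=k}^{n}\binom{n}{j}\Bigl(j-\tfrac n2\Bigr)+\sum_{F\in\mathcal{S}}\Bigl(|F|-\tfrac n2\Bigr),
\]
and by the symmetry identity $\sum_{j=0}^{n}\binom{n}{j}\bigl(2j-n\bigr)=0$ the first term on the right equals $\sum_{j=0}^{k-1}\binom{n}{j}\bigl(\tfrac n2-j\bigr)$.

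The crux --- and the only point where the hypothesis $n\ge 2k-2$ is used --- is the sign observation that $\tfrac n2-j\ge 0$ for every $j\le k-1$, equality being possible only in the boundary case $n=2k-2$ (where then the top term $j=k-1$ vanishes). Consequently $\sum_{j=0}^{k-1}\binom{n}{j}\bigl(\tfrac n2-j\bigr)=\sum_{G\subseteq[n],\,|G|<k}\bigl(\tfrac n2-|G|\bigr)$ is a sum of nonnegative terms indexed by a family that contains $\mathcal{S}$, hence is $\ge\sum_{F\in\mathcal{S}}\bigl(\tfrac n2-|F|\bigr)$. Plugging back in, $\sum_{i\in[n]}a_i\ge\sum_{F\in\mathcal{S}}\bigl(\tfrac n2-|F|\bigr)+\sum_{F\in\mathcal{S}}\bigl(|F|-\tfrac n2\bigr)=0$, as required, so $\mathcal{B}$ is FC and the bound $FC(k,n)\le\binom{n}{k}$ follows. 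I expect the only real obstacle to be isolating this sign observation and noticing that the arbitrary small sets of $\mathcal{F}$ contribute precisely a subset of these nonnegative terms with the opposite sign; identifying $\mathcal{B}$, the double count, and the binomial symmetry $\sum_j\binom{n}{j}(2j-n)=0$ are all routine.
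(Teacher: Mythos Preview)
The paper does not give its own proof of this theorem; it is quoted verbatim as a result of Gao and Yu~\cite{Gao}, solely to guarantee that $FC(k,n)$ is well-defined. So there is nothing in the paper to compare your argument against.

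On its merits, your argument is correct and is essentially the standard averaging proof: identify the generated family as $\{S\subseteq[n]:|S|\ge k\}$, double count to reduce to $\sum_{F\in\mathcal{F}}(|F|-n/2)$, use the binomial symmetry $\sum_{j}\binom{n}{j}(2j-n)=0$ to turn the $\mathcal{B}$-part into $\sum_{j<k}\binom{n}{j}(n/2-j)$, and invoke $n\ge 2k-2$ for the sign. The one slip is the sentence ``Since $\mathcal{B}$ already covers $[n]$, we have $U(\mathcal{F})=[n]$'': in the definition of an FC-family, $\mathcal{F}$ is an \emph{arbitrary} UC family containing $\mathcal{B}$, so $U(\mathcal{F})\supseteq[n]$ but equality can fail, and then your decomposition $\mathcal{F}=\mathcal{B}\cup\mathcal{S}$ breaks down (a set of size $\ge k$ meeting $[n]^c$ lies in neither piece). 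You have, however, already named the remedy: what your computation actually verifies is Poonen's condition~2 (Theorem~\ref{Poonen}) with the uniform weights $c_i=1/n$, and that condition is stated only for UC families $\mathcal{B}'\subseteq 2^{[n]}$ with $\mathcal{B}'\uplus\mathcal{B}=\mathcal{B}'$, for which your decomposition is valid. Invoking the implication $2\Rightarrow 1$ of Theorem~\ref{Poonen} then gives FC-ness for all $\mathcal{F}$. (To meet the hypothesis $\emptyset\in\mathcal{A}$ there, replace $\mathcal{B}$ by $\mathcal{B}\cup\{\emptyset\}$; this is harmless, since if $\mathcal{B}\cup\{\emptyset\}$ is FC then so is $\mathcal{B}$.) With that adjustment the proof is complete.
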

Thus, with the above in mind, our question about 3-sets becomes the following: What is the \emph{minimum} number of distinct 3-sets such that any UC family that contains them satisfies Frankl's conjecture? Vaughan~\cite{Vaughan3} proved the following result.
\begin{thrm}[Vaughan 2004]
Let $\mathcal{T}$ be a family of 3-sets such that $|U(\mathcal{T})|=n \geq 4$. Suppose that $|\mathcal{T}|\geq \frac{2n}{3} +1$. Then any UC family $\mathcal{F} \supset \mathcal{T}$ satisfies Frankl's conjecture. 
\end{thrm}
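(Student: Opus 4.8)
The plan is to argue from Poonen's characterization (Theorem~\ref{Poonen}) together with one elementary monotonicity remark and the exact certification of small FC-families supplied by Algorithm~\ref{row generation}. The monotonicity remark is: if $\mathcal{T}'\subseteq\mathcal{T}$ and $\gen{\mathcal{T}'}$ is an FC-family, then $\gen{\mathcal{T}}$ is an FC-family, since every union-closed $\mathcal{F}\supseteq\gen{\mathcal{T}}$ contains $\mathcal{T}$, hence $\mathcal{T}'$, hence (being union-closed) $\gen{\mathcal{T}'}$, and therefore satisfies Frankl's conjecture in some element of $U(\mathcal{T}')\subseteq[n]$. So it suffices to locate, inside every $\mathcal{T}$ meeting the hypothesis, either a sub-collection of $3$-sets already known to generate an FC-family or an explicit Poonen weight vector. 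One can also first dispose of disconnected $\mathcal{T}$: regarding $\mathcal{T}$ as a $3$-uniform hypergraph, if its vertex set splits as $[n]=V_1\sqcup V_2$ with no $3$-set meeting both parts and $\mathcal{T}_j$ denotes the $3$-sets inside $V_j$ with $n_j=|V_j|$, then $|\mathcal{T}_1|+|\mathcal{T}_2|\ge\tfrac{2n}{3}+1$ forces, away from a thin boundary of parameter values, that some $\mathcal{T}_j$ again satisfies $|\mathcal{T}_j|\ge\tfrac{2n_j}{3}+1$, and we recurse on that component; the boundary configurations are folded into the finite case analysis below. So we essentially reduce to $\mathcal{T}$ connected.

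The base of an induction on $n$ is $n\le 9$, settled by computation: from the classification of families of $3$-sets on at most nine vertices obtained by running Algorithm~\ref{row generation} with exact rational integer programming, every connected $\mathcal{T}$ with $|U(\mathcal{T})|=n\le 9$ and $|\mathcal{T}|\ge\tfrac{2n}{3}+1$ generates an FC-family. For $n\in\{4,5,6,7\}$ this already follows from the Gao--Yu theorem \cite{Gao} and from $FC(3,5)=3$, $FC(3,6)=4$, $FC(3,7)=4$; the cases $n\in\{8,9\}$ are the ones for which the exact solver is indispensable, and they simultaneously produce the finite library of minimal FC-configurations of $3$-sets invoked in the inductive step.

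For the inductive step take $n\ge 10$ and assume the statement for all smaller ground sets. Since $\sum_{v\in[n]}\deg_{\mathcal{T}}(v)=3|\mathcal{T}|\ge 2n+3$, not all degrees can be at most $2$, so some vertex $v$ has $\deg_{\mathcal{T}}(v)\ge 3$. Delete $v$ and the $3$-sets through it, then delete any vertices thereby isolated, obtaining $\mathcal{T}'$ on $n'$ vertices with $|\mathcal{T}'|=|\mathcal{T}|-\deg_{\mathcal{T}}(v)$; a short count shows that when the $3$-sets through $v$ contain enough private (degree-$1$) vertices the inequality $|\mathcal{T}'|\ge\tfrac{2n'}{3}+1$ is inherited, and we finish by induction and monotonicity. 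In the complementary case the vertices appearing alongside $v$ are mostly non-private, which pins down a dense local pattern around $v$ --- two $3$-sets meeting in two points, a ``tight triangle'' $\{abc,abd,acd\}$, a short sunflower, and similar --- and, after at most one further contraction to bring the number of involved vertices into range, this pattern is matched against the library from the base case. Organizing the argument by the intersection types (meeting in $0$, $1$, or $2$ points) of the $3$-sets incident to $v$ yields a finite case analysis that closes the induction.

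I expect the crux to be the tightness of the bound $\tfrac{2n}{3}+1$: a family attaining it has average degree barely above $2$, so the vertex-deletion bookkeeping --- in particular its interaction with newly isolated vertices and with the rounding whenever $n$ is not a multiple of $3$ --- must be carried out carefully, and a handful of the small configurations that survive all the reductions are not decided by any earlier result. These are exactly the points at which Vaughan's original argument stalls, and each is settled here by Algorithm~\ref{row generation}: either by certifying $X(\mathcal{A},c)=\emptyset$ for a suitable integer weight vector $c$, so that Proposition~\ref{FranklIntProp} applies, or by exhibiting explicit Poonen weights. The same classification of small FC-families is then what makes it possible to sharpen $\tfrac{2n}{3}+1$ down to $\floor{n/2}+1$ in the proof of Morris's $3$-sets conjecture.
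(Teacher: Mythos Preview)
The paper does not give its own proof of this theorem; it is quoted from Vaughan~\cite{Vaughan3} as background. That said, within the paper's framework it is an immediate corollary of Theorem~\ref{common} (three $3$-sets with a common element generate an FC-family), and this is in fact the opening computation of the paper's proof of Theorem~\ref{bigvaughan}: if $\gen{\mathcal{T}}$ is Non--FC, Theorem~\ref{common} forces $|\mathcal{T}_i|\le 2$ for every $i\in[n]$, so $3|\mathcal{T}|=\sum_{i\in[n]}|\mathcal{T}_i|\le 2n$, hence $|\mathcal{T}|\le\tfrac{2n}{3}$. The contrapositive is exactly the statement.

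Your proposal misses this one-line route and instead sets up an induction on $n$ via vertex deletion, and that induction has a genuine gap. Deleting a vertex $v$ with $\deg_{\mathcal{T}}(v)=d\ge 3$ removes $d$ sets but at least one vertex, so inheriting $|\mathcal{T}'|\ge\tfrac{2n'}{3}+1$ requires $n-n'\ge\tfrac{3d}{2}$; already for $d=3$ you need five vertices to disappear, i.e.\ at least four of the at most six neighbours of $v$ to have degree~$1$. You never analyse the many configurations in which this fails. Your fallback---matching a ``dense local pattern around $v$'' against a ``finite library of minimal FC-configurations'' produced by the base case---does not exist as stated: the base case you describe only verifies that every $\mathcal{T}$ with $|\mathcal{T}|\ge\tfrac{2n}{3}+1$ on $n\le 9$ points is FC; it does not produce a catalogue of small FC-generators that one could locate inside an arbitrary $\mathcal{T}$ on $n\ge 10$ points. (The connectivity reduction has the same defect: the ``thin boundary'' is acknowledged but never handled.) The right library entry here is precisely Theorem~\ref{common}, and once you have it the induction is unnecessary.
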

Furthermore Vaughan~\cite{Vaughan3} gave an interesting but incomplete proof attempt (in the positive) of the following, which we state as a question.
\begin{quest}[Vaughan 2004]\label{vaughanq1}
Let $\mathcal{T}$ be a family of 3-sets such that $|U(\mathcal{T})|=n\geq 4$. Suppose $|\mathcal{T}|\geq \floor{\frac{n}{2}} +1$. Does this imply that any UC family $\mathcal{F}$ such that $\mathcal{F} \supset \mathcal{T}$ satisfies Frankl's conjecture? 
\end{quest}
Vaughan \cite{Vaughan4} announced in a conference meeting that an answer in the positive was near completion but unfortunately the finished result never materialized in print and the author passed away four years after the announcement. Vaughan's original proof attempt in~\cite{Vaughan3} is based on a heuristic procedure used to identify ``candidate" FC-families (recast in polyhedral terms this becomes Question 1 in Pulaj~\cite{Pulaj}). It is conceivable that her announcement was based on a near completed answer in the positive to Question 1 in Pulaj~\cite{Pulaj} for general UC families $\mathcal{A}$, which as we already showed in~\cite{Pulaj}, does not hold. Morris~\cite{Morris} explicitly stated the 3-sets conjecture as the following.
\begin{conj}[Morris 2006]\label{3sets}
$FC(3,n) = \floor{\frac{n}{2}} + 1 $ for all $n \geq 4$.
\end{conj}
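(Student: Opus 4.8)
The plan is to establish the two inequalities $FC(3,n)\ge\floor{n/2}+1$ and $FC(3,n)\le\floor{n/2}+1$ separately for all $n\ge 4$. The first is a lower bound exhibiting a single family of 3-sets that fails to be FC; the second is exactly the positive answer to Vaughan's Question~\ref{vaughanq1}, and this is where essentially all of the work lies.

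For the lower bound it suffices to produce, for each $n\ge 4$, one Non--FC family $\mathcal{T}$ of exactly $\floor{n/2}$ 3-sets with $U(\mathcal{T})=[n]$. The natural candidates are the ``path'' family on consecutive triples $\left\{123,\,345,\,567,\ldots\right\}$ when $n$ is odd, and the analogous ``cycle'' family (the consecutive triples $\left\{2i-1,2i,2i+1\right\}$ together with one wrap-around triple) when $n$ is even, each using exactly $\floor{n/2}$ 3-sets with union $[n]$. For each such $\mathcal{T}$ one certifies Non--FC either by writing down an explicit UC family $\mathcal{F}\supseteq\mathcal{T}$ in which every $i\in[n]$ lies in fewer than $|\mathcal{F}|/2$ sets, or, for the small base cases, by running Algorithm~\ref{row generation} with exact arithmetic. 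Poonen~\cite{Poonen} already handles a single 3-set and Morris~\cite{Morris} the cases $n=5,6$, so the remaining task is to unify these into one construction together with a correctness proof, propagating from small $n$ by an elementary ground-set extension (adjoining two fresh elements and one 3-set joining them to the old ground set) that preserves Non--FC.

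For the upper bound one must show that every family $\mathcal{T}$ of $m:=\floor{n/2}+1$ 3-sets with $U(\mathcal{T})=[n]$ generates an FC-family. I would argue by strong induction on $n$, leaning on two reductions. First, monotonicity: if some subfamily $\mathcal{T}'\subseteq\mathcal{T}$ of 3-sets is already an FC-family on the ground set $U(\mathcal{T}')$, then any UC $\mathcal{F}\supseteq\mathcal{T}$ contains $\gen{\mathcal{T}'}$ and hence satisfies Frankl's conjecture, so $\mathcal{T}$ generates an FC-family. Second, the structural ``merge''/collapse operations on 3-uniform hypergraphs in the spirit of Vaughan~\cite{Vaughan3} and Morris~\cite{Morris}, which pass from $\mathcal{T}$ on $[n]$ to a related family on a smaller ground set without destroying the density condition $|\mathcal{T}|\ge\floor{\cdot}+1$, and from which FC-ness lifts back. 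The base of the induction is the range $4\le n\le 9$: here one enumerates, up to the evident symmetries, all (finitely many) families of $\floor{n/2}+1$ 3-sets with union $[n]$ and certifies each is FC by Algorithm~\ref{row generation}, which simultaneously re-proves $FC(3,5)=3$, $FC(3,6)=4$, and $FC(3,7)=4$ in a uniform, machine-verifiable way. For $n\ge 10$ the inductive step rests on the combinatorial dichotomy that any family of $\floor{n/2}+1$ 3-sets on $[n]$ either contains a subfamily that is FC on a ground set of size at most~$9$ (so the first reduction and the base case finish it) or admits one of the collapse reductions to a smaller ground set on which the induction hypothesis applies.

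The heart of the argument, and the step I expect to be the main obstacle, is precisely this dichotomy for $n\ge 10$, which is where Vaughan's attempt in~\cite{Vaughan3} stalled: her heuristic produced a finite list of candidate irreducible configurations, but the sufficient conditions she used in place of the full strength of Poonen's Theorem (Theorem~\ref{Poonen}) could not certify all of them to be FC. The resolution has two parts that must dovetail: making the case analysis genuinely exhaustive, by controlling the combinatorial explosion via the tight count $m=\floor{n/2}+1$ and classifying exactly how the 3-sets can fail to be ``locally dense''; and then discharging every surviving configuration with Algorithm~\ref{row generation}, whose exactness, underwritten by exact rational integer programming~\cite{Exact} and VIPR certificates~\cite{VIPR}, is what turns the computation into a rigorous proof rather than numerical evidence. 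Combining the completed dichotomy with the lower-bound construction then yields $FC(3,n)=\floor{n/2}+1$ for all $n\ge 4$.
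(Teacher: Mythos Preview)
Your high-level architecture matches the paper's: the lower bound is Morris's Theorem~\ref{lower} (the paper simply cites it rather than rebuilding the construction you sketch), and the upper bound is proved by induction on $n$ with base cases $4\le n\le 9$ discharged via the classification Tables~\ref{table:nfc(3,4)}--\ref{table:nfc(3,9)} and Algorithm~\ref{row generation}. So the framing is right.

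The gap is the inductive step, which you leave as an unspecified ``dichotomy'' and ``collapse/merge reductions.'' The paper's mechanism is concrete and rests on an ingredient you do not mention: by Theorem~\ref{common}, in any Non--FC family $\mathcal{S}$ of 3-sets every element has degree at most~$2$. Writing $a$ and $b$ for the number of elements of degree~$2$ and~$1$, one has $a+b=n$ and $2a+b=3k$, so $b=2n-3k$; if $b\ge k$ this already gives $k\le n/2$. If $b<k$, some 3-set has all three elements of degree~$2$, and Lemma~\ref{6lemma} then forces $\mathcal{S}$ to contain a copy of $\mathcal{I}=\{123,124,356\}$. The classification of small Non--FC families (Corollary~\ref{used}, Proposition~\ref{Vaugh}, Corollary~\ref{8coll}) now pins down enough degree-$1$ elements adjacent to that copy that deleting one or two specific 3-sets shrinks $|U|$ by exactly $2$ or $3$ while shrinking $|\mathcal{S}|$ by $1$ or $2$, and the induction hypothesis applies. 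This is not the shape you describe: the paper never reduces to a fixed ground set of size $\le 9$ inside the inductive step, and the ``collapse'' is not a hypergraph merge but a literal deletion of sets with controlled effect on $|U|$. Without the degree-count-plus-forced-$\mathcal{I}$ argument, your dichotomy has no proof, which is precisely the point at which Vaughan's attempt stalled and which the paper's Lemma~\ref{6lemma} and Proposition~\ref{Vaugh} resolve.
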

Morris~\cite{Morris} proved the lower bound for the conjecture, hence a positive answer to Question~\ref{vaughanq1} implies that the 3-sets conjecture holds.
\begin{thrm}[Morris 2006]\label{lower}
$\floor{\frac{n}{2}} + 1 \leq FC(3,n)$ for all $n \geq 4$.
\end{thrm}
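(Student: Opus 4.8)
The plan is to exhibit, for every $n\ge 4$, an explicit family $\mathcal{T}_n$ of exactly $\floor{n/2}$ distinct $3$-sets that does \emph{not} generate an FC-family; equivalently, $\gen{\mathcal{T}_n}$ (together with $\emptyset$) is Non-FC. Since $\mathcal{T}_n$ is then a particular choice of $\floor{n/2}$ of the $3$-sets in $[n]$ whose generated family is not FC, the definition of $FC(3,n)$ gives $FC(3,n)>\floor{n/2}$, i.e. $FC(3,n)\ge\floor{n/2}+1$, which is the claim (and $FC(3,n)$ is finite for $n\ge4$ by Gao and Yu). The natural candidate for $\mathcal{T}_n$ is the \emph{sunflower with core of size one}: fix the element $1$ and, when $n$ is odd, take the petals $\{1,2,3\},\{1,4,5\},\dots,\{1,n-1,n\}$, so the $\floor{n/2}$ petals partition $\{2,\dots,n\}$ into consecutive pairs; when $n$ is even, take $\{1,2,3\},\{1,4,5\},\dots,\{1,n-2,n-1\}$ together with $\{1,n-1,n\}$, again $\floor{n/2}$ sets with union $[n]$. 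This is the ``most economical'' Non-FC candidate: a $3$-set that reuses only one previously seen element contributes two new ground elements, which is exactly the ratio needed to reach $\floor{n/2}$ (pairwise disjoint $3$-sets would only give a bound of order $n/3$). The small cases $n=4,5$ can be checked by hand and agree with Morris's computed values $FC(3,5)=3$, $FC(3,6)=4$.

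To show $\mathcal{T}_n$ is Non-FC I would produce an explicit UC family $\mathcal{F}_n\supseteq\gen{\mathcal{T}_n}$ in which every $i\in[n]$ lies in strictly fewer than $|\mathcal{F}_n|/2$ of the sets; by Theorem~\ref{Poonen} this is equivalent to certifying that the weight polytope $P^{\gen{\mathcal{T}_n}}$ is empty (the computation Algorithm~\ref{row generation} performs for small $n$ in Section~\ref{polyhedral}). The petal symmetry of $\mathcal{T}_n$ — petals may be permuted, and the two non-core elements of a petal swapped — lets one restrict to a symmetric $\mathcal{F}_n$ and verify lightness only for two orbit representatives: the core $1$ and one leaf, say $2$. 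Concretely, a set $S$ is recorded by whether $1\in S$ together with the state $S\cap\{2j,2j+1\}\in\{\emptyset,\{2j\},\{2j+1\},\{2j,2j+1\}\}$ of each petal, so a UC family closed under the $3$-sets of $\mathcal{T}_n$ is a subsemilattice of $\{0,1\}\times L^{\floor{n/2}}$, where $L$ is the four-element union-semilattice on the subsets of a $2$-set, which is in addition closed under the $\floor{n/2}$ maps ``put the core in and fill petal $j$''; the witness $\mathcal{F}_n$ must live inside this structure. An alternative packaging, via the minimax form of Poonen's theorem, is to supply for each $i\in[n]$ a UC family $\mathcal{F}^{(i)}$ with $\mathcal{F}^{(i)}\uplus\gen{\mathcal{T}_n}=\mathcal{F}^{(i)}$ in which $i$ is strictly light and every other element lies in at most half the sets; summing the corresponding instances of \eqref{poon} over $i$ then contradicts the existence of any weight vector.

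The hard part — and the main obstacle — is making all $n$ elements light \emph{simultaneously}. A single element is easy: taking every set that omits the core, together with its forced petal-completions, gives a UC family in which (for odd $n$, say) the core sits in only $4^{\floor{n/2}}-3^{\floor{n/2}}$ of $2\cdot 4^{\floor{n/2}}-3^{\floor{n/2}}$ sets, comfortably below half — but a one-line count shows each leaf then lies in $4^{\floor{n/2}}-3^{\floor{n/2}-1}$ sets, which is \emph{above} half, so this family is not a witness. The real witness must balance the core against the leaves — thinning out the core-omitting part, or adding balanced extra sets, so that no leaf is over-represented while the core stays below half — and then, by symmetry, confirm the bound for every leaf. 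Carrying this out uniformly in $n$ and in the parity of $n$, rather than for a handful of small cases, is the crux; it is exactly what Morris's argument supplies, and what Algorithm~\ref{row generation} verifies computationally for $n\le 9$. Once a valid $\mathcal{F}_n$ (or the per-element family of the minimax packaging) is in hand, Non-FC-ness of $\mathcal{T}_n$ is immediate and $FC(3,n)\ge\floor{n/2}+1$ follows.
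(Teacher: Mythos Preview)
The paper does not supply its own proof of this theorem: it is stated as a result of Morris~\cite{Morris} and merely cited, so there is no argument in the paper to compare against. What can be compared is your proposed construction against the concrete Non--FC families the paper exhibits in its tables, and here there is a fatal problem.

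Your candidate $\mathcal{T}_n$ is a sunflower with core $\{1\}$: every $3$-set in $\mathcal{T}_n$ contains the element~$1$. But Theorem~\ref{common} (Vaughan 2004), stated in this very paper, says that \emph{any UC family that contains three $3$-sets with a common element satisfies Frankl's conjecture}. Thus as soon as $\floor{n/2}\ge 3$, i.e.\ for all $n\ge 6$, your $\mathcal{T}_n$ contains three $3$-sets all through~$1$, and every UC family $\mathcal{F}\supseteq\gen{\mathcal{T}_n}$ automatically satisfies Frankl. In other words $\gen{\mathcal{T}_n}$ is an FC-family, not a Non--FC one, and the construction gives no lower bound beyond $n\le 5$. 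No amount of balancing a witness family $\mathcal{F}_n$ can fix this: the obstruction is that no valid witness exists at all. Your own calculation already hints at this --- you found the core heavily under-represented and the leaves over-represented, but the truth is stronger: the core element~$1$ is abundant in \emph{every} UC extension.

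The genuine Non--FC generators visible in Tables~\ref{table:nfc(3,6)}--\ref{table:nfc(3,9)} show the correct shape: no element may lie in three of the $3$-sets, so the construction must spread the incidences out (e.g.\ $\{123,124,567,568\}$ for $n=8$, two disjoint ``books'' each on four elements). Morris's argument builds such a family and, crucially, supplies a uniform witness showing it is Non--FC; your outline, even after replacing the sunflower by a viable candidate, still defers that witness construction entirely to ``Morris's argument'', so as written it is a plan rather than a proof.
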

In what follows, we bring together nearly all known results on 3-sets in UC families. We also derive new results of interest, relying on Algorithm~\ref{row generation} when necessary. Although we limit the use of  Algorithm~\ref{row generation} in order to build on previous results, we note that all previous work on 3-sets in UC families can be directly derived and verified using Algorithm~\ref{row generation}. Our goal is to complete the proof attempt of Vaughan \cite{Vaughan3} with appropriate modifications for Algorithm~\ref{row generation} and other results we derive here.
\begin{definition}
Two families of sets contained in $2^{[n]}$ are \emph{isomorphic}, if and only if there exists a permutation of $[n]$ that transforms one into the other.
\end{definition}
Using our definition of isomorphic families of sets, since UC families have a unique minimal generator, we seek to classify UC families generated by 3-sets according to (a representative of) the isomorphism class of their generators.
\begin{definition}\label{collection}
For each $n\geq 4$, denote by $NFC(3,n)$ the largest integer $k$ such that there exists a family $\mathcal{A} \subset 2^{[n]}$  of $k$ 3-sets such that $U(\mathcal{A})=[n]$ and $\gen{\mathcal{A}}$ is a Non\textendash FC-family, and for any family $\mathcal{B} \subset 2^{[n]}$ of $k+1$ 3-sets such that $U(\mathcal{B})=[n]$, $\gen{\mathcal{B}}$ is an FC-family. Denote the collection of all such Non\textendash FC-families $\gen{\mathcal{A}}$ of cardinality $k$ by $\mathcal{T}(3,n)$.
\end{definition}
Theorem \ref{lower} implies that $NFC(3,n)$ is defined and $FC(3,n)-1 = NFC(3,n)$ for each $n\geq 4$. Thus it suffices to characterize $FC(3,n)$ to arrive at $NFC(3,n)$. 
Our goal is the classification of $\mathcal{T}(3,n)$ for all $n \leq 9$. Such a classification ensures w.l.o.g. that certain ``patterns'' are unavoidable for larger $n$. This enables the induction argument in Theorem~\ref{bigvaughan}, which leads to an upper bound on $NFC(3,n)$ and therefore an upper bound for $FC(3,n)$ for general $n$. First, we state the following results. 
\begin{thrm}[Vaughan 2003]\label{lesscommon}
Any UC family that contains a family of sets isomorphic to $\left\{135, 236, 456 \right\}$ satisfies Frankl's conjecture.
\end{thrm}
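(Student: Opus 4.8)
The plan is to apply Poonen's Theorem (Theorem~\ref{Poonen}) through its polyhedral form. After relabelling we may assume the family in question contains exactly $\left\{135, 236, 456\right\}$, and since inserting $\emptyset$ only strengthens the Frankl requirement it suffices to show that $\mathcal{A} := \gen{\left\{135,236,456\right\}}\cup\left\{\emptyset\right\}$, for which $U(\mathcal{A})=[6]$, is an FC-family; equivalently, $P^{\mathcal{A}}\neq\emptyset$. I would exhibit the explicit point $c$ with $c_3=c_5=c_6=\tfrac13$ and $c_1=c_2=c_4=0$, i.e.\ the uniform distribution on the three pairwise intersections $135\cap236=\left\{3\right\}$, $135\cap456=\left\{5\right\}$, $236\cap456=\left\{6\right\}$ of the generating $3$-sets. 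This choice is forced by the structure: each generator contains exactly two of $\left\{3,5,6\right\}$, and the configuration carries an $S_3$-symmetry permuting $\left\{135,236,456\right\}$ and accordingly permuting $\left\{3,5,6\right\}$ and $\left\{1,2,4\right\}$, so by convexity and invariance of $P^{\mathcal{A}}$ one may restrict the search to $S_3$-invariant weights, of which only a one-parameter family exists.

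With these weights, inequality~(\ref{poon}) becomes $\sum_{i\in\left\{3,5,6\right\}}|\mathcal{B}_i|\geq\tfrac32|\mathcal{B}|$, i.e.\ $\sum_{B\in\mathcal{B}}\bigl(|B\cap\left\{3,5,6\right\}|-\tfrac32\bigr)\geq0$, for every UC family $\mathcal{B}\subseteq2^{[6]}$ with $\mathcal{B}\uplus\mathcal{A}=\mathcal{B}$. Writing $N_k$ for the number of $B\in\mathcal{B}$ with $|B\cap\left\{3,5,6\right\}|=k$, the target reduces to the purely combinatorial inequality $3N_0+N_1\leq N_2+3N_3$. I would prove this by a charging argument based on the closure hypothesis: for every $B\in\mathcal{B}$ the sets $B\cup135,\ B\cup236,\ B\cup456$ lie in $\mathcal{B}$ and each meets $\left\{3,5,6\right\}$ in at least two points, while $B\cup135\cup236$, $B\cup135\cup456$, $B\cup236\cup456$ and $B\cup[6]=[6]$ meet it in all three. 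Parametrizing $B$ by the pair $\bigl(B\cap\left\{1,2,4\right\},\,B\cap\left\{3,5,6\right\}\bigr)$ and identifying $1,2,4$ with the $2$-subsets $\left\{3,5\right\},\left\{3,6\right\},\left\{5,6\right\}$ that the three generators respectively force, the closure relations become monotone maps between the layers $\mathcal{B}^{(S)}:=\left\{B\in\mathcal{B}:B\cap\left\{3,5,6\right\}=S\right\}$, each of which is itself union-closed inside $2^{\left\{1,2,4\right\}}$; one then redistributes the weight $3$ (resp.\ $1$) owed by every layer-$\emptyset$ (resp.\ layer-singleton) set onto the layer-$2$ and layer-$3$ sets produced by these maps, and checks over the $S_3$-orbit of configurations that no set is overcharged. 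The equality cases $\mathcal{B}=\mathcal{A}$ and $\mathcal{B}=2^{[6]}$ show the accounting is tight.

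The main obstacle is precisely this charging step — the residual combinatorial content that Poonen's Theorem always leaves to be handled by hand — and it genuinely requires all three generators at once, consistent with Poonen's observation that a single $3$-set is not FC: closure under union with one generator alone does not force $\sum_{i\in A}|\mathcal{B}_i|\ge\tfrac12|A|\,|\mathcal{B}|$. Two features keep the analysis finite and small: the $S_3$-symmetry collapses the cases to a few orbit representatives, and the closure operation is monotone, so the layers $\mathcal{B}^{(S)}$ are ordered by inclusion of $S$ and the argument proceeds top-down from $S=\left\{3,5,6\right\}$. Finally, the entire verification can be carried out mechanically instead: by Proposition~\ref{FranklIntProp} it suffices to check that $X(\mathcal{A},c)=\emptyset$ for the integral scaling $c=(0,0,1,0,1,1)$, and running Algorithm~\ref{row generation} on $\mathcal{A}$ terminates with output ``FC-family'', which is the uniform certification method used throughout this paper.
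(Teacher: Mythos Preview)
The paper does not prove Theorem~\ref{lesscommon}; it is quoted as a result of Vaughan~\cite{Vaughan2} and used as a black box in the tables. So there is no ``paper's own proof'' to compare against. That said, your approach is exactly the standard one and fits the paper's framework: apply Poonen's Theorem with the $S_3$-symmetric weight vector supported on the pairwise intersections $\{3,5,6\}$, reduce to the inequality $3N_0+N_1\le N_2+3N_3$, and verify it either by a direct charging argument or, in the spirit of this paper, by checking $X(\mathcal{A},c)=\emptyset$ for $c=(0,0,1,0,1,1)$ via Proposition~\ref{FranklIntProp}. Either route is valid, and the paper explicitly notes that ``all previous work on 3-sets in UC families can be directly derived and verified using Algorithm~\ref{row generation}.''

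Two small points. First, your claim that $\mathcal{B}=\mathcal{A}$ is an equality case is wrong: with $\mathcal{A}=\{\emptyset,135,236,456,12356,13456,23456,123456\}$ one has $(N_0,N_1,N_2,N_3)=(1,0,3,4)$, giving $3N_0+N_1=3$ versus $N_2+3N_3=15$. Only $\mathcal{B}=2^{[6]}$ is tight, which is already enough to show the chosen $c$ lies on the boundary of $P^{\mathcal{A}}$. Second, the charging argument is only sketched: the sentence ``one then redistributes the weight \ldots\ and checks over the $S_3$-orbit of configurations that no set is overcharged'' is precisely the place where the work happens, and as written it is a plan rather than a proof. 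If you intend to avoid the machine check, that step must be carried out explicitly; otherwise, the appeal to Algorithm~\ref{row generation} with $c=(0,0,1,0,1,1)$ is the clean finish and is fully consistent with how the paper certifies its other FC-families.
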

\begin{thrm}[Vaughan 2004]\label{common}
Any UC family that contains three 3-sets with a common element satisfies Frankl's conjecture.
\end{thrm}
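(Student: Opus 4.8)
The plan is to reduce the statement to a short finite list of generator isomorphism types and to settle each. Whether a UC family is FC depends only on the isomorphism type of its minimal generator, and a UC family $\mathcal{F}$ containing three $3$-sets with a common element contains the UC family $\mathcal{A}$ generated by those three $3$-sets; so it suffices to show that every UC family generated by three $3$-sets sharing a common element is FC. Normalizing the common element to $1$, the three sets are $\{1\}\cup S_1$, $\{1\}\cup S_2$, $\{1\}\cup S_3$ with $S_1,S_2,S_3$ distinct $2$-sets, and the isomorphism type of $\mathcal{A}$ is determined by that of the $3$-edge simple graph with edge set $\{S_1,S_2,S_3\}$, which has no isolated vertices. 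Up to isomorphism that graph is one of $K_3$, $P_4$, $K_{1,3}$, $P_3\cup K_2$, $3K_2$, with $|U(\mathcal{A})|$ equal to $4$, $5$, $5$, $6$, $7$ respectively, so it suffices to prove that the five families $\gen{123,124,134}$, $\gen{123,134,145}$, $\gen{123,124,125}$, $\gen{123,134,156}$, and $\gen{123,145,167}$ are FC.

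The first three are generated by three $3$-subsets of $[5]$, hence are FC by Morris's theorem $FC(3,5)=3$~\cite{Morris}, which is exactly the assertion that any three $3$-sets in $[5]$ generate an FC-family. The remaining two families have $|U(\mathcal{A})|=6$ and $7$ and are not covered by any previously known FC-family: for instance $\gen{123,145,167}$ contains no $1$-set, no $2$-set, and no three $3$-sets without a common element (its only three $3$-sets are $123,145,167$), so neither the classical $1$-set/$2$-set arguments nor Theorem~\ref{lesscommon} applies, and a direct argument is required. The plan is to obtain it from Proposition~\ref{FranklIntProp} and Algorithm~\ref{row generation}.

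Concretely, for each $\mathcal{A}\in\{\gen{123,134,156},\ \gen{123,145,167}\}$ the goal is to produce an integral weight vector $c\in\mathbb{Z}^{|U(\mathcal{A})|}_{\ge 0}$ with $\sum_i c_i\ge 1$ for which $X(\mathcal{A},c)=\emptyset$; by Proposition~\ref{FranklIntProp} this certifies that $\mathcal{A}$ is FC (equivalently, that $P^{\mathcal{A}}\neq\emptyset$ in Theorem~\ref{Poonen}). The symmetry of $\gen{123,145,167}$ singles out the candidate $c=(2,1,1,1,1,1,1)$, i.e.\ the Poonen weights $(\tfrac14,\tfrac18,\dots,\tfrac18)$: this is precisely the threshold forced by the admissible family $\mathcal{B}^{*}=2^{\{2,\dots,7\}}\cup\{\,S\subseteq[7]:1\in S\text{ and }S_k\subseteq S\text{ for some }k\,\}$, which is UC, satisfies $\mathcal{B}^{*}\uplus\mathcal{A}=\mathcal{B}^{*}$, and in which the common element $1$ lies in only $37$ of the $101$ sets; a comparable vector respecting the symmetries of $\gen{123,134,156}$ is used for that family. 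Since the weights may be taken supported on $U(\mathcal{A})$, the verification $X(\mathcal{A},c)=\emptyset$ reduces to a finite check --- one inequality per UC family $\mathcal{C}\subseteq 2^{U(\mathcal{A})}$ with $\mathcal{C}\uplus\mathcal{A}=\mathcal{C}$ --- which Algorithm~\ref{row generation} performs exactly.

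The main obstacle is exactly these last two cases. The injection arguments that dispose of $1$-sets and $2$-sets break down here, since in families closed under $\uplus\mathcal{A}$ no single element need be abundant, as $\mathcal{B}^{*}$ already witnesses for $\gen{123,145,167}$; one therefore has to balance the weight on the common element against the weight on the remaining elements and then certify that $X(\mathcal{A},c)$ is empty, equivalently that $P^{\mathcal{A}}$ is nonempty. Carrying this out by hand, particularly for $\gen{123,145,167}$, is the laborious step, and it is precisely what the exact rational integer-programming implementation behind Algorithm~\ref{row generation} is meant to discharge and to certify; that is how the proof would be completed.
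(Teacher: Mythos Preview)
The paper does not give its own proof of Theorem~\ref{common}; it is quoted as a result of Vaughan~\cite{Vaughan3} and thereafter used as a black box (in the classification tables, in Lemma~\ref{6lemma}, and in Theorem~\ref{bigvaughan}). So there is no argument in the paper to compare your proposal against; Vaughan's original proof in~\cite{Vaughan3} proceeds via her own weighting techniques rather than via Algorithm~\ref{row generation}.

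Your reduction to five generator isomorphism types via the $3$-edge graph $\{S_1,S_2,S_3\}$ is correct and is a nice way to organise the case analysis. Two cautions. First, watch for circularity when you invoke $FC(3,5)=3$: that is Theorem~\ref{FC(3,5)} (Morris~2006), which postdates Vaughan~2004, and you should verify that Morris's proof does not itself rely on Theorem~\ref{common}; within this paper both are imported without proof, so the dependency is opaque. If it is circular, the three small cases are still easy to discharge directly via Poonen weights. Second, the two large cases $\gen{123,134,156}$ and $\gen{123,145,167}$ are exactly where the content lies, and here your proposal remains a plan: you correctly identify the threshold weights (your example $\mathcal{B}^{*}$ indeed forces $c=(2,1,1,1,1,1,1)$ with equality in~\eqref{poon}), but you do not exhibit the certificate that $X(\mathcal{A},c)=\emptyset$. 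Deferring this to Algorithm~\ref{row generation} is consistent with the paper's methodology, but as written the argument is an outline that stops short at the step Vaughan actually carried out.
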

\begin{thrm}[Poonen 1992]\label{FC(3,4)}
$FC(3,4)=3$.
\end{thrm}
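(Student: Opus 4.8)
The plan is to prove the two inequalities $FC(3,4) \ge 3$ and $FC(3,4) \le 3$ separately. The lower bound is immediate from Theorem~\ref{lower}: taking $n = 4$ in $\floor{\frac{n}{2}} + 1 \le FC(3,n)$ gives $FC(3,4) \ge \floor{2} + 1 = 3$. (One could instead verify directly that some family of two 3-subsets of $[4]$, say $\{123, 124\}$, generates a Non--FC-family, but citing Theorem~\ref{lower} is cleaner and avoids a separate computation.)

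For the upper bound $FC(3,4) \le 3$, I would show that \emph{every} choice of three distinct 3-subsets of $[4]$ generates an FC-family. The key point is a pigeonhole fact peculiar to $n = 4$: there are only $\binom{4}{3} = 4$ distinct 3-subsets of $[4]$, namely the complements $[4] \setminus \{j\}$ for $j \in [4]$. A family $\mathcal{A}$ consisting of three of these four sets omits exactly one, say $[4] \setminus \{i\}$; then every set of $\mathcal{A}$ has the form $[4] \setminus \{j\}$ with $j \neq i$ and hence contains $i$. Thus any three distinct 3-subsets of $[4]$ share a common element.

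With this in hand, Theorem~\ref{common} applies directly: since $\mathcal{A}$ contains three 3-sets with a common element, every UC family $\mathcal{F} \supseteq \mathcal{A}$ satisfies Frankl's conjecture, so in particular $\gen{\mathcal{A}}$ is an FC-family. As $\mathcal{A}$ ranges over all three-element subfamilies of the 3-subsets of $[4]$, this gives $FC(3,4) \le 3$, and combining with the lower bound yields $FC(3,4) = 3$.

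I do not anticipate any real obstacle here: the statement is essentially a corollary of Theorems~\ref{lower} and~\ref{common}, together with the elementary observation that three 3-subsets of a 4-set must intersect. The only care needed is to read the lower bound off Theorem~\ref{lower} correctly and to confirm the common-element claim (a one-line check over the four 3-subsets of $[4]$), both of which are routine.
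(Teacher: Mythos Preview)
Your argument is correct. Note, however, that the paper does not give its own proof of this statement: Theorem~\ref{FC(3,4)} is stated as a result of Poonen (1992) and simply cited without proof, so there is no ``paper's proof'' to compare against. Your derivation from Theorems~\ref{lower} and~\ref{common} is valid within the paper's logical framework (no circularity arises), though chronologically those results of Morris (2006) and Vaughan (2004) postdate Poonen's original theorem; Poonen's own argument presumably goes through Theorem~\ref{Poonen} directly rather than via the later structural results. The pigeonhole observation that any three $3$-subsets of $[4]$ share a common element is exactly the right shortcut here.
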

\begin{cor}\label{nfc(3,4)}
$NFC(3,4)=2$.
\end{cor}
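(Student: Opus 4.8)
The plan is to deduce the corollary directly from Theorem~\ref{FC(3,4)} together with the identity $NFC(3,n)=FC(3,n)-1$ recorded right after Definition~\ref{collection} (which itself follows from Morris's lower bound, Theorem~\ref{lower}). Specializing to $n=4$ and using $FC(3,4)=3$ gives $NFC(3,4)=2$.

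For completeness one should check that this is consistent with Definition~\ref{collection} without any extra work. The equality $FC(3,4)=3$ unpacks into: (i) every family of three distinct 3-sets in $[4]$ generates an FC-family, and (ii) $3$ is least with this property, so some family of two distinct 3-sets in $[4]$ generates a Non--FC-family. In $[4]$ any two distinct 3-sets already have union $[4]$ (they meet in exactly two points), so (ii) supplies a family $\mathcal{A}$ of $k=2$ 3-sets with $U(\mathcal{A})=[4]$ and $\gen{\mathcal{A}}$ Non--FC, while (i) shows every family $\mathcal{B}$ of $k+1=3$ 3-sets with $U(\mathcal{B})=[4]$ has $\gen{\mathcal{B}}$ FC. Hence $NFC(3,4)=2$ in the sense of Definition~\ref{collection}. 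Moreover, since up to isomorphism $\left\{123,124\right\}$ is the only pair of distinct 3-sets on $[4]$, the collection $\mathcal{T}(3,4)$ consists, up to isomorphism, of the single Non--FC-family $\gen{\left\{123,124\right\}}$.

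There is essentially no obstacle here: the statement is a genuine corollary, obtained by specializing the already-established relation $FC(3,n)-1=NFC(3,n)$ to $n=4$. The only point needing a remark is the trivial observation that the union condition $U(\mathcal{A})=[4]$ in Definition~\ref{collection} holds automatically once $|\mathcal{A}|\geq 2$, which we used above.
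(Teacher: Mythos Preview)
Your proof is correct and follows the paper's own approach: the paper proves this corollary in one line, ``By Definition~\ref{collection} and Theorem~\ref{FC(3,4)},'' implicitly using the identity $FC(3,n)-1=NFC(3,n)$ stated immediately after Definition~\ref{collection}. Your additional verification that the union condition $U(\mathcal{A})=[4]$ is automatic for two distinct 3-sets in $[4]$, and your identification of $\mathcal{T}(3,4)$, are extra details not in the paper's proof but consistent with Table~\ref{table:nfc(3,4)}.
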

\begin{proof}
By Definition~\ref{collection} and Theorem~\ref{FC(3,4)}.
\end{proof}
Listing representatives of isomorphism classes for ``small'' families of sets is possible with the use of any computer algebra system. Furthermore, the output may be verified by hand as we outline in the appendix. In the following tables, in the left column, we will list representatives from all possible isomorphism classes for generators $\mathcal{S}$ with $NFC(3,n)$ 3-sets such that $U(\mathcal{S})=[n]$, for all $4\leq n\leq 9$. The classification of the UC families derived from the enumerated generators is achieved via Algorithm~\ref{row generation}. 

For generators which yield Non\textendash FC-families we exhibit the UC families which yield the coefficients and the right hand side scalar for an infeasible system of constraints from the second condition of Poonen's Theorem. Otherwise, in the right column, we give a reason why the generators yield an FC-family. If $X(\gen{\mathcal{S}},c)$ is empty for some vector $c \in \mathbb{Z}^{n}_{\geq 0}$ such that $\sum_{i \in [n]}c_i \geq 1$, then by Proposition~\ref{FranklIntProp} the UC family $\gen{\mathcal{S}}$ is an FC-family. In this case we display the entries of vector $c$ in the following way. The notation $i \mapsto k$ denotes $c_i = k$ for each $i \in [n]$, and some integer $k\geq 0$. This rather cumbersome notation safeguards against possible errors when reading the entries. We note that our isomorphism classes agree with those generated by Vaughan~\cite{Vaughan3}.\\\\
\begin{table}[h!]\label{table:nfc(3,4)}
\begin{center}
\begin{tabular}{|l|l|}
\hline
\multicolumn{2}{|c|}{Nonisomorphic generators $\mathcal{S}$ with two 3-sets such that $U(\mathcal{S})=[4]$}\\\hline
\hline
$\color{red}{123, 124}$ & \text{Only possible family (under permutations of $[4]$) of two 3-sets} \\\hline
\end{tabular}
\caption{Classification of 3-sets based on Corollary~\ref{nfc(3,4)}.}
\label{table:nfc(3,4)}
\end{center}
\end{table}

\begin{thrm}[Morris 2006]\label{FC(3,5)}
$FC(3,5)=3$.
\end{thrm}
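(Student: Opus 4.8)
The plan is to establish the two inequalities $FC(3,5)\le 3$ and $FC(3,5)\ge 3$ separately. The lower bound $FC(3,5)\ge 3$ is already handed to us: Theorem~\ref{lower} gives $\lfloor 5/2\rfloor+1 = 3 \le FC(3,5)$, so no work is needed there. (Concretely, one may also exhibit a single pair of $3$-sets on $[5]$, such as $\{123,145\}$, whose generated UC family is Non--FC, but the cited bound already suffices.) Hence the entire content of the theorem is the upper bound: \emph{every} family $\mathcal{S}$ of three distinct $3$-sets with $U(\mathcal{S})=[5]$ generates an FC-family.

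For the upper bound, the approach is a finite case analysis over isomorphism classes of three $3$-sets covering $[5]$. First I would enumerate, up to permutations of $[5]$, all families $\mathcal{S}$ consisting of three distinct $3$-subsets of $[5]$ with $U(\mathcal{S})=[5]$; there are only a handful of such classes, and a short combinatorial argument (or a computer algebra check, as the paper allows) produces representatives. Next, for each representative I would dispose of it by one of the already-available structural theorems when possible: if the three $3$-sets share a common element, Theorem~\ref{common} applies immediately; if the family is isomorphic to $\{135,236,456\}$, Theorem~\ref{lesscommon} applies. Any class not covered by these two theorems is then handed to Algorithm~\ref{row generation}: one exhibits an explicit nonnegative integer weight vector $c\in\mathbb{Z}^5_{\ge 0}$ with $\sum_i c_i\ge 1$ for which $X(\gen{\mathcal{S}},c)=\emptyset$, and Proposition~\ref{FranklIntProp} then certifies that $\gen{\mathcal{S}}$ is FC. The certificate for each such class is displayed as a weight assignment $i\mapsto c_i$ in the accompanying table, exactly as described in the paragraph preceding Table~\ref{table:nfc(3,4)}.

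The main obstacle is twofold. First, one must be sure the enumeration of isomorphism classes is \emph{complete}: missing a class would leave a gap, so the count must be cross-checked (the paper notes agreement with Vaughan~\cite{Vaughan3}, and the appendix outlines a by-hand verification). Second, for the classes that require Algorithm~\ref{row generation}, the nontrivial step is \emph{finding} a weight vector $c$ that makes the integer program $X(\gen{\mathcal{S}},c)$ infeasible; verifying infeasibility for a given $c$ is mechanical (and, in the paper's framework, done with exact rational arithmetic and VIPR certificates), but producing the right $c$ is the part where Poonen's Theorem gives no closed form. For $n=5$ the search space is small enough that this is not a serious computational burden, but it is the conceptual heart of the argument. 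Once every isomorphism class is cleared — each three-$3$-set generator on $[5]$ shown to be FC — we conclude $FC(3,5)\le 3$, and combining with Theorem~\ref{lower} yields $FC(3,5)=3$.
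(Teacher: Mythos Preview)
The paper does not supply its own proof of Theorem~\ref{FC(3,5)}; the result is simply quoted from Morris~\cite{Morris}. So there is nothing in the paper to compare your argument against --- you are proposing a fresh derivation using the paper's toolkit rather than reconstructing an argument the paper gives.

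Your plan is correct in outline, but two remarks are in order. First, Theorem~\ref{lesscommon} is irrelevant here: the family $\{135,236,456\}$ has a six-element union, so no family of three $3$-sets on $[5]$ can be isomorphic to it, and that branch of your case split is vacuous. Second, the enumeration is smaller than you suggest. Counting degrees, three $3$-sets covering $[5]$ give degree sequence summing to $9$ with each entry at least $1$ and at most $3$; the possibilities are $(3,3,1,1,1)$, $(3,2,2,1,1)$, and $(2,2,2,2,1)$. The first two force a common element, so Theorem~\ref{common} handles them. For $(2,2,2,2,1)$ there is, up to isomorphism, exactly one family, namely $\{123,124,345\}$, and this is the \emph{only} class that actually requires Algorithm~\ref{row generation} and Proposition~\ref{FranklIntProp} (or some other direct argument). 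Once that single class is certified FC, the upper bound $FC(3,5)\le 3$ follows, and Theorem~\ref{lower} gives the matching lower bound.
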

\begin{cor}\label{nfc(3,5)}
$NFC(3,5)=2$.
\end{cor}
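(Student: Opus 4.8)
\textbf{Proof plan for Corollary~\ref{nfc(3,5)}.}

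The plan is to derive $NFC(3,5)=2$ directly from Theorem~\ref{FC(3,5)} together with the bookkeeping relation established after Definition~\ref{collection}. Recall that $NFC(3,n)$ was shown to satisfy $FC(3,n)-1 = NFC(3,n)$ for every $n\geq 4$: indeed, Theorem~\ref{lower} guarantees that there exist generators of $\lfloor n/2\rfloor$ 3-sets (and in particular of $FC(3,n)-1$ 3-sets) whose union is $[n]$ and whose union-closure is Non--FC, while by definition of $FC(3,n)$ every family of $FC(3,n)$ 3-sets on $[n]$ generates an FC-family; hence the largest $k$ for which a Non--FC generator of $k$ 3-sets exists is exactly $FC(3,n)-1$, which is the defining property of $NFC(3,n)$ in Definition~\ref{collection}.

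Concretely, I would argue as follows. First, invoke Theorem~\ref{FC(3,5)} to get $FC(3,5)=3$. Then, by the identity $NFC(3,5)=FC(3,5)-1$, conclude $NFC(3,5)=2$. To make this self-contained without re-deriving the identity, one can also spell it out in the $n=5$ case: since $FC(3,5)=3$, every family of three 3-sets on $[5]$ generates an FC-family, so no Non--FC generator has $3$ or more 3-sets; and since $FC(3,5)>2$, there is some family of two 3-sets on $[5]$ whose generated UC family is Non--FC. Together these two facts say precisely that the largest $k$ with the property in Definition~\ref{collection} is $k=2$, i.e. $NFC(3,5)=2$.

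This corollary is immediate and has no real obstacle; the only thing to be slightly careful about is the condition $U(\mathcal{S})=[n]$ in Definition~\ref{collection}, so one should note that the witnessing family of two 3-sets must actually cover $[5]$ — but two 3-sets can cover at most $6$ elements and at least (when they share two elements) $4$, and one checks that a covering pair on $[5]$ exists whose union-closure is Non--FC (for instance any pair sharing exactly one element, consistent with the lower bound construction of Theorem~\ref{lower}). Hence the statement follows, and the proof can simply read: \emph{By Definition~\ref{collection} and Theorem~\ref{FC(3,5)}}, exactly as in the proof of Corollary~\ref{nfc(3,4)}.
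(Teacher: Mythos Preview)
Your proposal is correct and follows exactly the paper's approach: the paper's proof is the single line ``By Definition~\ref{collection} and Theorem~\ref{FC(3,5)},'' which is precisely what you arrive at, and your additional remarks about the covering condition $U(\mathcal{S})=[5]$ simply make explicit what the paper leaves implicit in the sentence following Definition~\ref{collection}.
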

\begin{proof}
By Definition~\ref{collection} and Theorem~\ref{FC(3,5)}.
\end{proof}
\begin{table}[h!]
\begin{center}
\begin{tabular}{|l|l|}
\hline
\multicolumn{2}{|c|}{Nonisomorphic generators $\mathcal{S}$ with two 3-sets such that $U(\mathcal{S})=[5]$}\\\hline
\hline
$\color{red}{123, 145}$ & \text{Only possible family (under permutations of $[5]$) of two 3-sets} \\\hline
\end{tabular}
\caption{Classification of 3-sets based on Corollary~\ref{nfc(3,5)}.}
\label{table:nfc(3,5)}
\end{center}
\end{table}
\begin{thrm}[Morris 2006]\label{FC(3,6)}
$FC(3,6)=4$.
\end{thrm}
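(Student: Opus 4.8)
The plan is to prove the two inequalities $FC(3,6)\ge 4$ and $FC(3,6)\le 4$ separately. The lower bound requires nothing new: Theorem~\ref{lower} gives $\lfloor 6/2\rfloor+1=4\le FC(3,6)$ immediately. The whole content is therefore the upper bound $FC(3,6)\le 4$, which by the identity $FC(3,n)-1=NFC(3,n)$ is equivalent to the assertion that every family $\mathcal{B}$ of four distinct 3-sets with $U(\mathcal{B})=[6]$ generates an FC-family.

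First I would enumerate, up to the action of the symmetric group on $[6]$, all families of four distinct 3-subsets of $[6]$ whose union is $[6]$; there are at most $\binom{20}{4}$ such families before the isomorphism reduction, so this is a finite computation that a computer algebra system can produce and that can be re-checked by hand in the spirit of the appendix. For each isomorphism-class representative $\mathcal{B}$ I would then show $\gen{\mathcal{B}}$ is FC. Here I would use that the FC property is monotone --- a UC family that contains an FC-family is itself FC, since any UC family lying above it also lies above the FC-subfamily --- so that in most cases it suffices to locate inside $\mathcal{B}$ a sub-configuration already known to be FC: three 3-sets with a common element (Theorem~\ref{common}), a triple isomorphic to $\{135,236,456\}$ (Theorem~\ref{lesscommon}), or any three of the 3-sets whose union has size $4$ or $5$ (FC by Theorems~\ref{FC(3,4)} and~\ref{FC(3,5)}, which assert $FC(3,4)=FC(3,5)=3$).

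For any residual representatives not disposed of in this way --- four-set families all of whose three-element subfamilies span $[6]$ and fall into a Non\textendash FC class of the $n=6$ table --- I would appeal directly to Algorithm~\ref{row generation}. Running it on $\gen{\mathcal{B}}$ must return ``FC-family'' (since the expected value is $FC(3,6)=4$), which by its specification exhibits a vector $c\in\mathbb{Z}^6_{\geq 0}$ with $\sum_{i\in[6]}c_i\ge 1$ and $X(\gen{\mathcal{B}},c)=\emptyset$; Proposition~\ref{FranklIntProp} then certifies that $\gen{\mathcal{B}}$ is FC, and the vector $c$ is recorded in the table so the verification is reproducible by hand. In parallel, to obtain the exact value rather than only the inequality, I would record in the $n=6$ table an explicit family of three 3-sets with $U=[6]$ together with a UC family $\mathcal{B}$ realizing an infeasible instance of inequality~\eqref{poon}; this witnesses $NFC(3,6)\ge 3$, and combined with the upper bound gives $NFC(3,6)=3$, i.e. $FC(3,6)=4$.

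The main obstacle is not any individual computation but the completeness and correctness of the case analysis: ensuring the enumeration of four-3-set families on $[6]$ up to isomorphism misses nothing, and, for every class not already killed by Theorems~\ref{common}, \ref{lesscommon}, \ref{FC(3,4)} and~\ref{FC(3,5)}, that the integer program $X(\gen{\mathcal{B}},c)=\emptyset$ truly has no feasible point. This is precisely where the exact rational integer programming underlying Algorithm~\ref{row generation}, together with VIPR branch-and-bound certificates, does the work of making the verification rigorous.
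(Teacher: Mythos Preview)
The paper does not supply its own proof of this statement: Theorem~\ref{FC(3,6)} is attributed to Morris (2006) and quoted without argument. Your proposal is therefore not a recovery of the paper's reasoning but an independent proof in the paper's framework, and the strategy is sound --- the lower bound is immediate from Theorem~\ref{lower}, and the upper bound reduces to a finite check that every family of four $3$-sets on $[6]$ either contains a known FC subconfiguration (via Theorems~\ref{common}, \ref{lesscommon}, \ref{FC(3,4)}, \ref{FC(3,5)}) or is certified FC by Algorithm~\ref{row generation} and Proposition~\ref{FranklIntProp}.

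One point to tidy: you invoke ``the $n=6$ table'' to decide which three-$3$-set subfamilies are Non\textendash FC, but in the paper's logical order Table~\ref{table:nfc(3,6)} is presented as a consequence of Corollary~\ref{nfc(3,6)}, which in turn rests on Theorem~\ref{FC(3,6)} itself. This is only a presentational circularity --- the FC/Non\textendash FC status of the three entries in that table is established directly by Theorems~\ref{lesscommon} and~\ref{common} and by an Algorithm~\ref{row generation} run on $\{123,124,356\}$, none of which presupposes $FC(3,6)=4$ --- but your write-up should make that independence explicit rather than citing the table as given. A second small gap: the reduction ``$FC(3,6)\le 4$ $\Leftrightarrow$ every four-$3$-set family with $U(\mathcal{B})=[6]$ is FC'' silently drops the cases $|U(\mathcal{B})|\in\{4,5\}$; these are handled by $FC(3,4)=FC(3,5)=3$, but you should say so.
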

\begin{cor}\label{nfc(3,6)}
$NFC(3,6)=3$.
\end{cor}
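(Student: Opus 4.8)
The plan is to derive this corollary directly from Definition~\ref{collection} and Theorem~\ref{FC(3,6)}, in complete analogy with the proofs of Corollaries~\ref{nfc(3,4)} and~\ref{nfc(3,5)}. The only point to verify is the passage from the value of $FC(3,6)$ to the value of $NFC(3,6)$, and this is handled once and for all by the remark following Definition~\ref{collection}: Theorem~\ref{lower} (Morris's lower bound) guarantees that $NFC(3,n)$ is well-defined and that $FC(3,n)-1 = NFC(3,n)$ for every $n \geq 4$. Substituting $FC(3,6)=4$ from Theorem~\ref{FC(3,6)} then yields $NFC(3,6)=3$.

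If one prefers to see the identity $FC(3,6)-1 = NFC(3,6)$ unpacked in this instance, the argument has two halves. First, $FC(3,6)=4$ says that every family of four 3-sets in $[6]$ generates an FC-family, so no generator consisting of 3-sets with union $[6]$ whose generated UC family is Non\textendash FC can have four or more members; hence $NFC(3,6)\le 3$. Second, Theorem~\ref{lower} gives $\floor{6/2}+1 = 4 \le FC(3,6)$, which is precisely the statement that some family of three 3-sets with union $[6]$ generates a Non\textendash FC-family; hence $NFC(3,6)\ge 3$. Combining the two bounds gives $NFC(3,6)=3$, and $\mathcal{T}(3,6)$ is then the collection of the corresponding Non\textendash FC generators on three 3-sets.

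I expect no genuine obstacle in the corollary itself: all the mathematical content sits in Theorem~\ref{FC(3,6)} of Morris, which in the present framework is recovered by running Algorithm~\ref{row generation} on a representative of each isomorphism class of four 3-sets with union $[6]$ (certifying FC-ness either through an empty $X(\gen{\mathcal{S}},c)$ via Proposition~\ref{FranklIntProp}, or through Theorems~\ref{common} and~\ref{lesscommon}) together with exhibiting one Non\textendash FC generator of three 3-sets. The ``proof'' is then just the bookkeeping relating that classification to the generic two-sided bound above.
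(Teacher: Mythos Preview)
Your proposal is correct and matches the paper's own proof, which is simply ``By Definition~\ref{collection} and Theorem~\ref{FC(3,6)}.'' Your additional unpacking of the two-sided bound is accurate elaboration of the remark following Definition~\ref{collection}, but is not needed for the corollary itself.
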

\begin{proof}
By Definition~\ref{collection} and Theorem~\ref{FC(3,6)}.
\end{proof}
\begin{table}[h!]
\begin{center}
\scalebox{0.96}{
\begin{tabular}{|l|l|}
\hline
\multicolumn{2}{|c|}{Nonisomorphic generators $\mathcal{S}$ with three 3-sets such that $U(\mathcal{S})=[6]$}\\\hline
\hline
$126, 356, 456$ & \text{FC-family by Theorem \ref{common}}\\\hline
$\color{red}{123, 124, 356}$ & $\tt{infeasible}$ $\tt{ system}$ $\mathcal{P}([n] \setminus \left\{j\right\})\uplus \left\{\emptyset, 123, 124, 356\right\}, \forall j \in [n]$ \\\hline
$135, 236, 456$ & \text{FC-family by Theorem \ref{lesscommon}}\\\hline
\end{tabular}
}
\caption{Classification of 3-sets based on Corollary~\ref{nfc(3,6)}.}
\label{table:nfc(3,6)}
\end{center}
\end{table}
\begin{thrm}[Mari\'c et. al.  2012] \label{serbs}
Any UC family which contains a family $\mathcal{S}$ of four 3-sets such that $|U(\mathcal{S})|=7$ satisfies Frankl's conjecture.
\end{thrm}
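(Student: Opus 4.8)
The plan is to reduce Theorem~\ref{serbs} to a finite isomorphism classification and to settle each class either by a structural result already in hand or by a weight certificate produced by Algorithm~\ref{row generation}. Relabelling the ground set we may assume $U(\mathcal{S})=[7]$, and (adjoining $\emptyset$, which does not change the FC status of a family) we set $\mathcal{A}:=\gen{\mathcal{S}\cup\{\emptyset\}}\subseteq 2^{[7]}$. By the definition of an FC-family it then suffices to prove that $\mathcal{A}$ is FC for every family $\mathcal{S}$ of four distinct $3$-subsets of $[7]$ with $U(\mathcal{S})=[7]$, since any UC family $\mathcal{F}$ containing a copy of such an $\mathcal{S}$ contains $\gen{\mathcal{S}}$.

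First I would enumerate, up to the action of $S_7$, all families of four distinct $3$-subsets of $[7]$ whose union is $[7]$; as with the smaller tables this is a routine computation with a computer algebra system, and the list of representatives can be re-checked by hand along the lines of the appendix. For each representative $\mathcal{S}$ three situations arise. If some three members of $\mathcal{S}$ have a common element, then $\mathcal{A}$ is FC by Theorem~\ref{common}. If some three members of $\mathcal{S}$ form a family isomorphic to $\{135,236,456\}$, then $\mathcal{A}$ is FC by Theorem~\ref{lesscommon}. For each remaining representative I would run Algorithm~\ref{row generation}, which returns a vector $c\in\mathbb{Z}^{7}_{\ge 0}$ with $\sum_{i}c_i\ge 1$ and $X(\mathcal{A},c)=\emptyset$; Proposition~\ref{FranklIntProp} then certifies that $\mathcal{A}$ is an FC-family. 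As before I would tabulate each such $c$ in the $i\mapsto k$ notation, together with the UC family realizing the binding Poonen inequality where relevant, so that every case is independently checkable. Since the representatives exhaust, up to permutation, all families of four $3$-sets with $|U(\mathcal{S})|=7$, and FC-ness is invariant under permutations of the ground set, this proves the theorem; it recovers the result of Mari\'c, \v Zivkovi\'c and Vu\v ckovi\'c by a different route, and the enumerated isomorphism classes should be cross-checked against Vaughan's list.

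I expect the main obstacle to be exactly the representatives not covered by Theorems~\ref{common} and~\ref{lesscommon}: for these one must genuinely produce Poonen weights, and the polytope $P^{\mathcal{A}}$ — equivalently, the separation problem over $X(\mathcal{A},c)$ — carries a potentially exponential number of constraints, so the search is only tractable through the cutting-plane loop of Algorithm~\ref{row generation}. The delicate point is soundness: each linear program in the loop and each integer-infeasibility test for $X(\mathcal{A},c)=\emptyset$ must be carried out in exact rational arithmetic, and the branch-and-bound infeasibility proofs must be accompanied by VIPR certificates, so that the final table of weights is a machine-checkable proof rather than a numerical heuristic. Once this is in place, the per-case verification reduces to substituting the tabulated $c$ into Proposition~\ref{FranklIntProp}.
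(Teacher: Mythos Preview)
The paper does not supply a proof of Theorem~\ref{serbs} at all: it is quoted as a result of Mari\'c, \v Zivkovi\'c and Vu\v ckovi\'c, who established it by a formalised combinatorial search in Isabelle/HOL, and the paper simply uses it (to obtain $FC(3,7)=4$ in Corollary~\ref{fc(3,7)}). Your proposal is therefore not a comparison against the paper's proof but an independent re-derivation via the paper's own cutting-plane machinery. The paper explicitly notes that this is possible---``all previous work on 3-sets in UC families can be directly derived and verified using Algorithm~\ref{row generation}''---so your plan is exactly in the spirit of that remark.

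As an approach your outline is sound: enumerate the $S_7$-orbits of families of four $3$-subsets of $[7]$ with union $[7]$, discharge as many as possible by Theorems~\ref{common} and~\ref{lesscommon} (and, if you wish, by $FC(3,5)$ and $FC(3,6)$ when three of the four $3$-sets already have union of size at most $6$), and for each surviving representative produce a weight vector $c$ with $X(\gen{\mathcal{S}},c)=\emptyset$ and invoke Proposition~\ref{FranklIntProp}. What distinguishes this from the Mari\'c et~al.\ route is the certificate: they verify Poonen's condition by exhaustively checking UC families inside an interactive theorem prover, whereas you would exhibit, for each hard case, a single integer vector $c$ whose infeasibility proof (exact rational IP plus VIPR certificate) can be checked independently of the enumeration. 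The trade-off is that your argument is not complete until those vectors are actually tabulated; as written it is a correct plan, not yet a proof.
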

\begin{cor}\label{fc(3,7)}
$FC(3,7)=4$.
\end{cor}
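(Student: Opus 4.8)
The statement to prove is Corollary~\ref{fc(3,7)}: $FC(3,7)=4$. The plan is to combine the upper bound from Theorem~\ref{serbs} with the lower bound from Theorem~\ref{lower}, just as the preceding corollaries combined definitions with the relevant $FC(3,n)$ theorems.

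\begin{proof}
We first establish $FC(3,7) \leq 4$. By Theorem~\ref{serbs}, any UC family $\mathcal{F}$ that contains a family $\mathcal{S}$ of four 3-sets with $|U(\mathcal{S})| = 7$ satisfies Frankl's conjecture. Any collection of four of the 3-sets in $[7]$ has union equal to some subset of $[7]$; after restricting attention to $U(\mathcal{S})$ and noting that isomorphic families behave identically, it suffices to check the cases $|U(\mathcal{S})| \in \{4,5,6,7\}$. If $|U(\mathcal{S})| = 7$, then $\gen{\mathcal{S}}$ is an FC-family by Theorem~\ref{serbs}. If $|U(\mathcal{S})| \leq 6$, then $\mathcal{S}$ contains at least four 3-sets whose union has size at most $6$, so in particular it contains more than $\floor{|U(\mathcal{S})|/2} + 1$ 3-sets for each such cardinality (indeed $4 > NFC(3,n)$ for all $n \leq 6$ by Corollaries~\ref{nfc(3,4)}, \ref{nfc(3,5)}, and~\ref{nfc(3,6)}, using $FC(3,n)-1 = NFC(3,n)$); hence $\gen{\mathcal{S}}$ is again an FC-family. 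Therefore any four 3-sets in $[7]$ generate an FC-family, which gives $FC(3,7) \leq 4$.

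For the lower bound, Theorem~\ref{lower} gives $\floor{7/2} + 1 = 4 \leq FC(3,7)$. Combining the two bounds yields $FC(3,7) = 4$.
\end{proof}

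The reduction to $|U(\mathcal{S})|=7$ in the upper bound is the only step requiring care: one must verify that when four 3-sets fail to cover $[7]$, the induced smaller instance is already known to be FC. This is where the earlier corollaries on $NFC(3,n)$ for $n \leq 6$ are invoked, so no new application of Algorithm~\ref{row generation} is needed here. The main obstacle, such as it is, is purely bookkeeping: ensuring that every isomorphism class of four 3-sets on a ground set of size at most $7$ is accounted for by one of the previously established results rather than left to a separate computation.
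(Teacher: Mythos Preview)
Your argument is correct and uses exactly the same two ingredients as the paper --- Theorem~\ref{serbs} for the upper bound and Theorem~\ref{lower} for the lower bound --- so the approach is essentially identical. Two small remarks: first, under the paper's convention (explicit in the original Morris formulation, in Definition~\ref{collection}, and in the proof of Proposition~\ref{FC(3,8)}), $FC(3,n)$ concerns families $\mathcal{S}$ with $U(\mathcal{S})=[n]$, so Theorem~\ref{serbs} already gives $FC(3,7)\leq 4$ directly and your case split on $|U(\mathcal{S})|\leq 6$ is not needed; second, your phrase ``more than $\lfloor |U(\mathcal{S})|/2\rfloor+1$'' is off for $|U(\mathcal{S})|=6$ (you have exactly $4$, not more than $4$) --- the parenthetical inequality $4>NFC(3,6)=3$ is the correct statement, and that is what actually does the work.
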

\begin{proof}
We arrive at $FC(3,7)=4$ by combining Theorem \ref{serbs} with Theorem \ref{lower}. 
\end{proof}
\begin{cor}\label{nfc(3,7)}
$NFC(3,7)=3$.
\end{cor}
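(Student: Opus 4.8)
The statement to prove is Corollary~\ref{nfc(3,7)}: $NFC(3,7)=3$.

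\medskip

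\noindent\textbf{Plan.} The argument is immediate from the machinery already set up. Recall from the discussion following Definition~\ref{collection} that Theorem~\ref{lower} guarantees $NFC(3,n)$ is well-defined and that $FC(3,n)-1=NFC(3,n)$ for every $n\ge 4$. Thus the plan is simply to invoke Corollary~\ref{fc(3,7)}, which states $FC(3,7)=4$, and subtract one.

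\medskip

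\noindent\textbf{Steps.} First I would cite Corollary~\ref{fc(3,7)} to obtain $FC(3,7)=4$; that corollary in turn rests on Theorem~\ref{serbs} (the Mari\'c--\v Zivkovi\'c--Vu\v ckovi\'c result that any UC family containing four 3-sets whose union has size $7$ satisfies Frankl's conjecture) for the upper bound, and on Theorem~\ref{lower} (Morris's lower bound $\lfloor n/2\rfloor+1\le FC(3,n)$, which at $n=7$ gives $4\le FC(3,7)$) for the matching lower bound. Second, I would apply the identity $NFC(3,n)=FC(3,n)-1$ with $n=7$, which follows from Definition~\ref{collection} together with Theorem~\ref{lower}: the largest $k$ for which there is a Non--FC generator of $k$ 3-sets with full union, such that every generator of $k+1$ such 3-sets is FC, is exactly one less than the smallest $m$ for which all generators of $m$ 3-sets are FC. This yields $NFC(3,7)=4-1=3$.

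\medskip

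\noindent\textbf{Main obstacle.} There is no real obstacle: this corollary is purely bookkeeping, packaging Corollary~\ref{fc(3,7)} into the $NFC$ notation so that the table of nonisomorphic generators on three 3-sets with $U(\mathcal{S})=[7]$ can be presented and classified via Algorithm~\ref{row generation} in the next table. The only thing to be careful about is that Definition~\ref{collection} demands both that a Non--FC generator of size $k$ exist \emph{and} that all generators of size $k+1$ be FC; the former is witnessed by the explicit Non--FC generators appearing in the forthcoming classification table for $n=7$, and the latter is exactly Theorem~\ref{serbs}.
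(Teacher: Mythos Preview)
Your proposal is correct and mirrors the paper's own proof, which simply reads ``By Definition~\ref{collection} and Corollary~\ref{fc(3,7)}.'' Your additional remarks on why $NFC(3,n)=FC(3,n)-1$ and on the existence witness are accurate elaborations of what the paper leaves implicit.
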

\begin{proof}
By Definition~\ref{collection} and Theorem~\ref{fc(3,7)}.
\end{proof}
\begin{table}[h!]\label{table:nfc(3,7)}
\begin{center}
\scalebox{0.97}{
\begin{tabular}{|l|l|}
\hline
\multicolumn{2}{|c|}{Nonisomorphic generators $\mathcal{S}$ with three 3-sets such that $U(\mathcal{S})=[7]$}\\\hline
\hline
$\color{red}{123, 124, 567}$ &  $\tt{infeasible}$ $\tt{ system}$ $\mathcal{P}([n] \setminus \left\{j\right\})\uplus \left\{\emptyset, 123, 124, 567\right\}, \forall j \in [n]$ \\\hline
$127, 347, 567$ &  \text{FC-family by Theorem \ref{common}}\\\hline
$\color{red}{126, 347, 567}$ & $\tt{infeasible}$ $\tt{ system}$ $\mathcal{P}([n] \setminus \left\{j\right\})\uplus \left\{\emptyset, 126, 347, 567\right\}, \forall j \in [n]$ \\\hline
\end{tabular}
}
\caption{Classification of 3-sets based on Corollary~\ref{nfc(3,7)}.}
\end{center}
\end{table}

Using Corollary~\ref{fc(3,7)} we can also characterize $FC(3,8)$ as in the following proposition.
\begin{prop}\label{FC(3,8)}
$FC(3,8)=5$.
\end{prop}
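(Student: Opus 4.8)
The plan is to derive $FC(3,8)=5$ from the already-settled values $FC(3,m)$ for $m\le 7$ together with a single counting step; I do not expect to need Algorithm~\ref{row generation} for this. The lower bound $FC(3,8)\ge 5$ is immediate from Theorem~\ref{lower}, since $\floor{8/2}+1=5$. For the upper bound I would take an arbitrary family $\mathcal{S}$ of five distinct $3$-sets in $2^{[8]}$ and prove $\gen{\mathcal{S}}$ is an FC-family.

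The heart of the argument is the claim that every such $\mathcal{S}$ contains a subfamily $\mathcal{S}'$ of four distinct $3$-sets with $|U(\mathcal{S}')|\le 7$. If $|U(\mathcal{S})|\le 7$ this is immediate, as any four members of $\mathcal{S}$ work. If $|U(\mathcal{S})|=8$, I would double-count incidences: $\sum_{i\in[8]}|\mathcal{S}_i|=3\cdot 5=15<16=2\cdot 8$, so some $i\in[8]$ has $|\mathcal{S}_i|\le 1$; since $i\in U(\mathcal{S})$ we must have $|\mathcal{S}_i|=1$, and deleting the unique $S_0\in\mathcal{S}$ with $i\in S_0$ leaves $\mathcal{S}':=\mathcal{S}\setminus\{S_0\}$, a family of four distinct $3$-sets whose union does not contain $i$ and hence has size at most $7$.

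Granting the claim, write $n''=|U(\mathcal{S}')|$, so $4\le n''\le 7$ (four distinct $3$-sets span at least four points). By Theorem~\ref{FC(3,4)}, Theorem~\ref{FC(3,5)}, Theorem~\ref{FC(3,6)} and Corollary~\ref{fc(3,7)} we have $FC(3,n'')\le 4$, so the UC family generated by the four $3$-sets of $\mathcal{S}'$ is an FC-family; moreover this holds when that family is regarded inside $2^{[8]}$, since being an FC-family depends only on the family itself and not on which ground set containing its union is used (a routine projection argument on the test families $\mathcal{B}$ appearing in Poonen's Theorem). Since $\gen{\mathcal{S}}\supseteq\gen{\mathcal{S}'}$ and every UC family containing an FC-family is itself an FC-family, $\gen{\mathcal{S}}$ is an FC-family. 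Together with the lower bound this gives $FC(3,8)=5$.

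I do not anticipate a genuine obstacle here: the proof is a pigeonhole reduction to the cases $n\le 7$, the only mild technical point being the ground-set independence of the FC property invoked above. It is worth emphasising, though, that the reduction is special to $n=8$: it uses the strict inequality $3(\floor{n/2}+1)<2n$ to force a degree-$1$ element, and deleting its $3$-set drops the union below $8$. At $n=9$ the corresponding count $3\cdot 5=15<18=2\cdot 9$ still produces a degree-$1$ element, but deleting its $3$-set leaves four $3$-sets spanning up to $8$ points, where $FC(3,8)=5>4$ is no longer sufficient; that failure is precisely what necessitates the explicit classification of $\mathcal{T}(3,9)$ via Algorithm~\ref{row generation} in the subsequent arguments.
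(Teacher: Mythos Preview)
Your proof is correct and follows essentially the same approach as the paper: both find an element of degree one by counting incidences, delete the unique $3$-set containing it, and reduce to the known values $FC(3,n)$ for $n\le 7$. Your write-up is in fact a bit more explicit than the paper's (you spell out the double-count and the case $|U(\mathcal{S})|\le 7$, and you note the ground-set independence of the FC property), but the underlying argument is the same pigeonhole reduction.
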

\begin{proof}
Given a family $\mathcal{S}$ of five 3-sets such that $U(\mathcal{S})=[8]$, there is always an element $i^* \in U(\mathcal{S})$ that $i^*$ is in exactly one of the five 3-sets. Let  $A \in \mathcal{S}$ such that $i^* \in A$. Consider $\mathcal{S} \setminus \left\{A\right\}$. Then $5 \leq |U(\mathcal{S} \setminus \left\{A\right\})| \leq 7$. Assume w.l.o.g that $U(\mathcal{S} \setminus \left\{A\right\})=[n]$, for each $5 \leq n \leq 7$. Corollary \ref{fc(3,7)} with Theorem~\ref{FC(3,6)} and Theorem~\ref{FC(3,5)} yield the result. 
\end{proof}
\begin{cor}\label{nfc(3,8)}
$NFC(3,8)=4$.
\end{cor}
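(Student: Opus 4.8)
The plan is to deduce this directly from Proposition~\ref{FC(3,8)}, in exact parallel with Corollaries~\ref{nfc(3,4)}, \ref{nfc(3,5)}, \ref{nfc(3,6)}, and \ref{nfc(3,7)}. First I would recall that by Theorem~\ref{lower} the quantity $NFC(3,n)$ is well-defined for every $n \geq 4$, and that $FC(3,n) - 1 = NFC(3,n)$ for all such $n$; this is precisely the observation recorded immediately after Definition~\ref{collection}. Since Proposition~\ref{FC(3,8)} establishes $FC(3,8) = 5$, substituting into this identity at $n = 8$ (which is legitimate because $8 \geq 4$) yields $NFC(3,8) = FC(3,8) - 1 = 4$.

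There is essentially no obstacle here, since all of the combinatorial content has already been absorbed into Proposition~\ref{FC(3,8)}, whose proof reduces the value of $FC(3,8)$ to the previously determined values $FC(3,7)$, $FC(3,6)$, and $FC(3,5)$ by deleting, from any family $\mathcal{S}$ of five 3-sets with $U(\mathcal{S}) = [8]$, a 3-set containing an element of $[8]$ that lies in only one of the five sets. The only point to verify is that the identity $FC(3,n) - 1 = NFC(3,n)$ indeed applies at $n = 8$, which it does. Hence the ``hard part,'' such as it is, lives one level up in Proposition~\ref{FC(3,8)}, and the corollary itself is a one-line bookkeeping consequence.
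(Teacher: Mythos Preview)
Your proposal is correct and follows exactly the same approach as the paper: the paper's proof is simply ``By Definition~\ref{collection} and Proposition~\ref{FC(3,8)},'' which is precisely the bookkeeping step $NFC(3,8)=FC(3,8)-1=5-1=4$ that you spell out.
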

\begin{proof}
By Definition~\ref{collection} and Proposition~\ref{FC(3,8)}.
\end{proof}

\begin{table}[h!]
\begin{center}
\scalebox{0.88}{
\begin{tabular}{|l|l|}
\hline
\multicolumn{2}{|c|}{Nonisomorphic generators $\mathcal{S}$ with four 3-sets such that $U(\mathcal{S})=[8]$}\\\hline
\hline
$123, 478, 578, 678$ & \text{generates FC-family by Theorem \ref{common}}\\\hline
$123, 468, 578, 678$ & \text{generates FC-family by Theorem \ref{common}}\\\hline
$128, 348, 578, 678$ & \text{generates FC-family by Theorem \ref{common}}  \\\hline
$127, 348, 578, 678$ & \text{generates FC-family by Theorem \ref{common}}  \\\hline
$126, 348, 578, 678$ & \text{generates FC-family by Theorem \ref{common}}  \\\hline
$124, 348, 578, 678$ & \text{generates FC-family by Theorem \ref{common}}  \\\hline
$126, 346, 578, 678$ & \text{generates FC-family by Theorem \ref{common}}  \\\hline
$125, 346, 578, 678$ & \scriptsize{ $ 1  \mapsto 1$, $ 2  \mapsto 1$, $ 3  \mapsto 1$, $ 4  \mapsto 1$, $ 5  \mapsto 2$, $ 6  \mapsto 2$, $ 7  \mapsto 2$, $ 8  \mapsto 2$} \\\hline
$135, 237, 458, 678$ & \scriptsize{ $ 1  \mapsto 1$, $ 2  \mapsto 1$, $ 3  \mapsto 2$, $ 4  \mapsto 1$, $ 5  \mapsto 2$, $ 6  \mapsto 1$, $ 7  \mapsto 2$, $ 8  \mapsto 2$} \\\hline
$\color{red}{123, 124, 356, 678}$ & $\tt{infeasible}$ $\tt{ system}$ $\mathcal{P}([n] \setminus \left\{j\right\})\uplus \left\{\emptyset, 123, 124, 356, 678\right\}, \forall j \in [n]$ \\\hline
$\color{red}{123, 124, 567, 568}$ & $\tt{infeasible}$ $\tt{ system}$ $\mathcal{P}([n] \setminus \left\{j\right\})\uplus \left\{\emptyset, 123, 124, 567, 568 \right\}, \forall j \in [n]$ \\\hline
$123, 456, 578, 678$ & \text{since} $|U(\left\{456, 578, 678\right\})|=5$ \text{then FC-family by Theorem \ref{FC(3,5)}} \\\hline
$126, 357, 458, 678$ & $\left\{357, 458, 678\right\}$ \text{generates FC family by Theorem \ref{lesscommon}} \\\hline
\end{tabular}
}
\caption{Classification of 3-sets based on Corollary~\ref{nfc(3,8)}.}
\label{table:nfc(3,8)}
\end{center}
\end{table}

\begin{prop}\label{reduce}
Let $\mathcal{D}$ be the collection of all families of 3-sets $\mathcal{S}$ such that $|\mathcal{S}|=4$, $U(\mathcal{S})=[8]$ and $\gen{\mathcal{S}}$ is a Non\textendash FC-family. Suppose that, for each $\mathcal{S} \in \mathcal{D}$ and each 2-set $A \in \mathcal{P}([8])$, it follows that $\gen{\mathcal{S} \cup \left\{A \cup \left\{9\right\}\right\}}$ is an FC-family. Then $FC(3,9)=5$.
\end{prop}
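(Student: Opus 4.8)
The plan is to prove the two bounds $FC(3,9)\ge 5$ and $FC(3,9)\le 5$ separately. The lower bound is immediate from Theorem~\ref{lower}, which gives $FC(3,9)\ge\floor{9/2}+1=5$. For the upper bound I must show that every family $\mathcal{S}$ of five distinct $3$-subsets of $[9]$ generates an FC-family. Throughout I will use the elementary fact that if $\mathcal{S}'\subseteq\mathcal{S}$ and $\gen{\mathcal{S}'}$ is an FC-family, then so is $\gen{\mathcal{S}}$, since $\gen{\mathcal{S}'}\subseteq\gen{\mathcal{S}}$ and the FC property passes to UC supersets; I will also use that FC-ness is invariant under permutations of the ground set.

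The first step is to split on $n':=|U(\mathcal{S})|$. A $4$-set has only $\binom{4}{3}=4$ distinct $3$-subsets, so $n'\ge 5$, and after relabeling we may assume $U(\mathcal{S})=[n']$. If $n'\le 7$, then $\mathcal{S}$ contains a subfamily consisting of $FC(3,n')$ of its $3$-sets — three of them when $n'=5$, four of them when $n'\in\{6,7\}$ — and this subfamily already generates an FC-family by Theorem~\ref{FC(3,5)}, Theorem~\ref{FC(3,6)}, or Corollary~\ref{fc(3,7)}; hence $\gen{\mathcal{S}}$ is FC. If $n'=8$, then $\mathcal{S}$ is a family of five $3$-sets with $U(\mathcal{S})=[8]$, so $\gen{\mathcal{S}}$ is FC directly by Proposition~\ref{FC(3,8)}. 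This reduces everything to the new case $n'=9$.

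For $n'=9$ the structural input is a counting argument: writing $d_j$ for the number of $3$-sets of $\mathcal{S}$ containing $j$, we have $\sum_{j\in[9]}d_j=15<18=2\cdot 9$, so the $d_j$ cannot all be at least $2$; since $U(\mathcal{S})=[9]$ forces $d_j\ge 1$ for every $j$, some element lies in exactly one $3$-set of $\mathcal{S}$. Relabel so that this element is $9$, let $B=\{x,y,9\}$ be the unique $3$-set containing it (so $\{x,y\}$ is a $2$-subset of $[8]$), and set $\mathcal{S}':=\mathcal{S}\setminus\{B\}$. Then $\mathcal{S}'$ is a family of four $3$-sets with $U(\mathcal{S}')\subseteq[8]$ (only $B$ contains $9$) and $U(\mathcal{S}')\supseteq[8]\setminus\{x,y\}$ (because $U(\mathcal{S})=[9]$), so $|U(\mathcal{S}')|\in\{6,7,8\}$. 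If $|U(\mathcal{S}')|\in\{6,7\}$ then $\gen{\mathcal{S}'}$ is FC by Theorem~\ref{FC(3,6)} or Corollary~\ref{fc(3,7)}, and therefore $\gen{\mathcal{S}}$ is FC. If $|U(\mathcal{S}')|=8$ (that is, $U(\mathcal{S}')=[8]$), then either $\gen{\mathcal{S}'}$ is already FC (and we are done), or $\mathcal{S}'\in\mathcal{D}$ in the notation of the statement; in the latter case the hypothesis, applied to $\mathcal{S}'$ and the $2$-set $A:=\{x,y\}\in\mathcal{P}([8])$, gives that $\gen{\mathcal{S}'\cup\{A\cup\{9\}\}}=\gen{\mathcal{S}}$ is an FC-family. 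In every case $\gen{\mathcal{S}}$ is FC, so $FC(3,9)\le 5$, and combined with the lower bound this yields $FC(3,9)=5$.

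The only real content beyond bookkeeping is arranging the reduction so that removing a single $3$-set always lands in a previously settled situation — a strictly smaller universe handled by Theorem~\ref{FC(3,6)}/Corollary~\ref{fc(3,7)}, or a family in $\mathcal{D}$ to which the stated hypothesis applies — and this is precisely why the hypothesis is phrased in terms of augmenting a member of $\mathcal{D}$ by a $3$-set of the form $A\cup\{9\}$ with $A$ a $2$-set of $[8]$. I expect this bookkeeping to be the main (and essentially the only) obstacle; the counting bound and the monotonicity of FC-ness under UC supersets and ground-set permutations are routine.
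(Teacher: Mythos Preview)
Your proof is correct and follows essentially the same approach as the paper's own proof: isolate an element of $[9]$ lying in exactly one of the five $3$-sets, remove that set, and reduce to the already-established values of $FC(3,n)$ for $n\le 8$ or to the hypothesis on $\mathcal{D}$. Your write-up is in fact slightly more complete than the paper's, since you explicitly handle the case $|U(\mathcal{S})|<9$ and make the counting argument for the existence of the singleton element explicit, as well as invoking Theorem~\ref{lower} for the lower bound.
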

\begin{proof}
Let $\mathcal{S}'$ be a family of five 3-sets such that $U(\mathcal{S}')=[9]$. Then there is always an element $i^* \in U(\mathcal{S}')$ such that $i^*$ is in exactly one 3-set. Let $A \in \mathcal{S}'$ such that $i^*\in A$. Then $6 \leq |U(\mathcal{S}' \setminus \left\{A\right\})|\leq 8$. Suppose $6\leq|U(\mathcal{S}' \setminus \left\{A\right\})|\leq 7$. Assume, w.l.o.g. that $U(\mathcal{S}' \setminus \left\{A\right\})=[n]$ for each $6\leq n \leq 7$. Since $|\mathcal{S}' \setminus \left\{A\right\}|=4$, Theorem~\ref{FC(3,6)} and Corollary~\ref{fc(3,7)} imply that $\gen{\mathcal{S}' \setminus \left\{A\right\}}$ is an FC-family. Hence, consider $|U(\mathcal{S}' \setminus \left\{A\right\})|= 8$ and assume, w.l.o.g. that $U(\mathcal{S}' \setminus \left\{A\right\})=[8]$. It suffices to consider $\mathcal{S}'$ such that $\gen{\mathcal{S}' \setminus \left\{A\right\}}$ is a Non\textendash FC-family. Let $\mathcal{D}$ be the collection of all families of 3-sets $\mathcal{S}$ such that $|\mathcal{S}|=4$, $U(\mathcal{S})=[8]$ and $\gen{\mathcal{S}}$ is a Non\textendash FC-family. Suppose that, for each $\mathcal{S} \in \mathcal{D}$ and each 2-set $A \in \mathcal{P}([8])$, it follows that $\gen{\mathcal{S} \cup \left\{A \cup \left\{9\right\}\right\}}$ is an FC-family. Then $FC(3,9)=5$. 
\end{proof}
Consider all nonisomorphic generators $\mathcal{S}$ with four 3-sets such that $U(\mathcal{S})=[8]$ and  $\gen{\mathcal{S}}$ is a Non\textendash FC-family. They are the following families of 3-sets (red entries in Table~\ref{table:nfc(3,8)}): $\mathcal{G}:=\left\{123, 124, 356, 678\right\} \subset \mathcal{P}([8])$, and $\mathcal{H}:=\left\{123, 124, 567, 568\right\} \subset \mathcal{P}([8])$.
\begin{cor}\label{FC(3,9)}
 $FC(3,9)=5$.
\end{cor}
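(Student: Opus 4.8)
The plan is to prove the two bounds $FC(3,9)\geq 5$ and $FC(3,9)\leq 5$ separately. The lower bound is immediate from Theorem~\ref{lower}, which gives $FC(3,9)\geq\floor{9/2}+1=5$. For the upper bound I will invoke Proposition~\ref{reduce}: it then suffices to verify the hypothesis of that proposition, namely that for \emph{every} Non--FC generator $\mathcal{S}$ consisting of four 3-sets with $U(\mathcal{S})=[8]$, and for \emph{every} 2-set $A\in\mathcal{P}([8])$, the family $\gen{\mathcal{S}\cup\{A\cup\{9\}\}}$ is an FC-family. By the classification recorded in Table~\ref{table:nfc(3,8)} (equivalently, Corollary~\ref{nfc(3,8)}), up to isomorphism there are exactly two such $\mathcal{S}$: $\mathcal{G}=\{123,124,356,678\}$ and $\mathcal{H}=\{123,124,567,568\}$. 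Note that each $\mathcal{S}\cup\{A\cup\{9\}\}$ has union $[9]$ and consists of five 3-sets.

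To make the verification finite and short, I would first reduce by isomorphism on the choices of $A$. For each of $\mathcal{G}$ and $\mathcal{H}$ I would compute the subgroup of permutations of $[8]$ stabilizing the family setwise, and then pick one representative $A$ from each orbit of 2-subsets of $[8]$ under this group; each representative gives one five-element family on $[9]$ to classify. Most representatives are settled by results already available. If $A$ contains an element lying in two of the sets of $\mathcal{S}$ (for $\mathcal{G}$ these are $1,2,3,6$; for $\mathcal{H}$ they are $1,2,5,6$), then the enlarged family has three 3-sets through a common element and is FC by Theorem~\ref{common}. If some subfamily of the five 3-sets already generates an FC-family --- for instance because three of them have a union of size at most $5$ (Theorem~\ref{FC(3,5)}), or four of them a union of size at most $6$ or $7$ (Theorem~\ref{FC(3,6)}, Corollary~\ref{fc(3,7)}), or because some triple is isomorphic to $\{135,236,456\}$ (Theorem~\ref{lesscommon}) --- then monotonicity of the FC property finishes that case. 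For the handful of remaining representatives (those where $A$ avoids the ``doubled'' elements, e.g.\ $A\subseteq\{4,5,7,8\}$ in the case of $\mathcal{G}$), I would run Algorithm~\ref{row generation} in exact rational arithmetic; each such run must terminate with a certificate, either a weight vector $c\in\mathbb{Z}^{9}_{\geq 0}$ with $X(\gen{\mathcal{S}\cup\{A\cup\{9\}\}},c)=\emptyset$ (so FC by Proposition~\ref{FranklIntProp}) or an explicit infeasible subsystem of Poonen's inequalities. I would display these certificates in a table in the same format as Table~\ref{table:nfc(3,8)}. Having confirmed that every extension $\gen{\mathcal{S}\cup\{A\cup\{9\}\}}$ is FC, Proposition~\ref{reduce} yields $FC(3,9)=5$.

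I expect the main obstacle to be twofold. First, it is \emph{a priori} possible that one of the extensions is itself Non--FC --- in which case $FC(3,9)\geq 6$ and the statement would fail --- so the cases not covered by the shortcut theorems genuinely require Algorithm~\ref{row generation}, and one must rely on the exactness of the rational integer programming (and the branch-and-bound certificates) to be certain the ``FC'' verdicts are correct rather than artifacts of floating-point arithmetic. Second, the isomorphism bookkeeping has to be airtight: missing a single orbit of 2-sets would leave a gap, so I would cross-check the orbit enumeration against the automorphism groups of $\mathcal{G}$ and $\mathcal{H}$ and, as in the appendix for the earlier tables, verify it by hand.
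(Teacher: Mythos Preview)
Your proposal is correct and follows essentially the same route as the paper: invoke Proposition~\ref{reduce}, reduce via Table~\ref{table:nfc(3,8)} to the two Non--FC families $\mathcal{G}$ and $\mathcal{H}$, enumerate orbit representatives of 2-sets $A\subset[8]$ under the stabilizer, dispatch most cases with Theorems~\ref{lesscommon}, \ref{common}, \ref{FC(3,5)}, \ref{FC(3,6)} and Corollary~\ref{fc(3,7)}, and certify the remaining ones with weight vectors from Algorithm~\ref{row generation}. The only redundancy is that your separate appeal to Theorem~\ref{lower} for the lower bound is unnecessary, since Proposition~\ref{reduce} already concludes the equality $FC(3,9)=5$.
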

\begin{proof}
Let $\mathcal{D}:=\left\{\mathcal{G}, \mathcal{H} \right\}$. In the appendix, for each $\mathcal{S} \in \mathcal{D}$ and each $A \subset \mathcal{P}([8])$ such that $|A| = 2$, we show that $\gen{\mathcal{S} \cup \left\{A \cup \left\{9\right\}\right\}}$ is an FC-family by considering all nonisomorphic $\mathcal{S} \cup \left\{A \cup \left\{9\right\}\right\}$. The result follows from Proposition~\ref{reduce}. 
\end{proof}
\begin{cor}\label{nfc(3,9)}
$NFC(3,9)=4$.
\end{cor}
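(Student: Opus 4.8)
The plan is to obtain $NFC(3,9)=4$ as the $n=9$ instance of the general identity $NFC(3,n)=FC(3,n)-1$, combined with the value of $FC(3,9)$ just established. Recall that, as noted immediately after Definition~\ref{collection}, Morris's lower bound (Theorem~\ref{lower}) guarantees both that $NFC(3,n)$ is well defined for every $n\geq 4$ and that it equals $FC(3,n)-1$: the point is that $FC(3,n)$ is, by definition, the threshold at which \emph{every} family of that many 3-sets generates an FC-family, while Theorem~\ref{lower} witnesses a family of $FC(3,n)-1$ 3-sets (with union the full ground set) that generates a Non\textendash FC-family, so the largest admissible size of a Non\textendash FC generator is exactly one below that threshold. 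This is precisely the content of the remark that ``it suffices to characterize $FC(3,n)$ to arrive at $NFC(3,n)$.''

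Given that identity, the only remaining ingredient is Corollary~\ref{FC(3,9)}, which establishes $FC(3,9)=5$. Substituting $n=9$ yields $NFC(3,9)=FC(3,9)-1=5-1=4$, as claimed; the proof is therefore the single line ``By Definition~\ref{collection}, Theorem~\ref{lower}, and Corollary~\ref{FC(3,9)},'' exactly parallel to Corollaries~\ref{nfc(3,4)}--\ref{nfc(3,8)}. If one preferred not to quote the packaged identity $NFC(3,n)=FC(3,n)-1$, one could instead verify the two clauses of Definition~\ref{collection} for $n=9$, $k=4$ directly: the ``$k+1$'' clause (every family of five 3-sets with union $[9]$ generates an FC-family) is precisely Corollary~\ref{FC(3,9)}, and the ``$k$'' clause asks for one explicit Non\textendash FC generator of four 3-sets with union $[9]$, supplied by Morris's lower-bound construction in Theorem~\ref{lower}.

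I expect essentially no obstacle at this stage, since all of the genuine difficulty has already been absorbed into Corollary~\ref{FC(3,9)} --- and hence into Proposition~\ref{reduce} together with the appendix case analysis, where the two Non\textendash FC generators $\mathcal{G}=\{123,124,356,678\}$ and $\mathcal{H}=\{123,124,567,568\}$ identified in Table~\ref{table:nfc(3,8)} are shown to yield FC-families once any 3-set on the new ground element $9$ is adjoined. The only point worth stating carefully is that the Non\textendash FC witness required by the ``$k$'' clause of Definition~\ref{collection} must have union exactly $[9]$; this is already guaranteed by the way Theorem~\ref{lower} is invoked when asserting $NFC(3,n)=FC(3,n)-1$, so no additional argument is needed here.
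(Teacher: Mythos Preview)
Your proposal is correct and follows essentially the same approach as the paper, which proves this corollary with the single line ``By Definition~\ref{collection} and Corollary~\ref{FC(3,9)}.'' Your inclusion of Theorem~\ref{lower} merely makes explicit the identity $NFC(3,n)=FC(3,n)-1$ that the paper states immediately after Definition~\ref{collection}, so there is no substantive difference.
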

\begin{proof}
By Definition~\ref{collection} and Corollary~\ref{FC(3,9)}.
\end{proof}
\begin{table}[h!]
\begin{center}
\scalebox{0.88}{
\begin{tabular}{|l|l|}
\hline
\multicolumn{2}{|c|}{Nonisomorphic generators $\mathcal{S}$ with four 3-sets such that $U(\mathcal{S})=[9]$}\\\hline
\hline
$123, 459, 689, 789$ &  \text{generates FC family by Theorem \ref{common}} \\\hline
$129, 349, 569, 789$ & \text{generates FC family by Theorem \ref{common}}\\\hline
$128, 349, 569, 789$ & \text{generates FC family by Theorem \ref{common}} \\\hline
$\color{red}{123, 457, 689, 789}$ & $\tt{infeasible}$ $\tt{ system}$ $\mathcal{P}([n] \setminus \left\{j\right\})\uplus \left\{\emptyset, 123, 457, 689, 789\right\}, \forall j \in [n]$ \\\hline
$\color{red}{123, 124, 356, 789}$ & $\tt{infeasible}$ $\tt{ system}$ $\mathcal{P}([n] \setminus \left\{j\right\})\uplus \left\{\emptyset, 123, 124, 356, 789\right\}, \forall j \in [n]$ \\\hline
$\color{red}{125, 345, 689, 789}$ & $\tt{infeasible}$ $\tt{ system}$ $\mathcal{P}([n] \setminus \left\{j\right\})\uplus \left\{\emptyset, 125, 345, 689, 789\right\}, \forall j \in [n]$ \\\hline
$123, 468, 569, 789$ & $\left\{468, 569, 789\right\}$ \text{generates FC family by Theorem \ref{lesscommon}}\\\hline
$127, 348, 569, 789$ & \scriptsize{ $ 1  \mapsto 1$, $ 2  \mapsto 1$, $ 3  \mapsto 1$, $ 4  \mapsto 1$, $ 5  \mapsto 1$, $ 6  \mapsto 1$, $ 7  \mapsto 2$, $ 8  \mapsto 2$, $ 9  \mapsto 2$} \\\hline
\end{tabular}
}
\caption{Classification of 3-sets based on Corollary~\ref{nfc(3,9)}.}
\label{table:nfc(3,9)}
\end{center}
\end{table}
In the rest of this section we closely follow and complete the proof attempt of Vaughan~\cite{Vaughan3} by recasting it in the proposed framework of this chapter. Thus, we are able to close the ``gaps'' with our classification of $\mathcal{T}(3,n)$ for all $4 \leq n \leq 9$ and Algorithm~\ref{row generation} where necessary.
\begin{prop} \label{Vaugh}
Let $\mathcal{S}:= \mathcal{G} \cup \left\{\left\{a,b,c\right\}\right\}$, such that $\left\{a,b,c\right\}$ is any 3-set for distinct $a,b,c \in \mathbb{N}_{1}$. Suppose $\left\{a,b,c\right\} \cap [8] \neq \emptyset$. Then $\gen{\mathcal{S}}$ is an FC-family.
\end{prop}
\begin{proof}
Suppose $\left\{a,b,c\right\} \cap [8] \neq \emptyset$ and recall Theorem \ref{common}. Since a family with more than two 3-sets which share an element is FC, we note that $|\mathcal{S}_1|, |\mathcal{S}_2|, |\mathcal{S}_3|,|\mathcal{S}_6|\geq 2$. Therefore if $\left\{a,b,c\right\} \cap \left\{1,2,3,6\right\} \neq \emptyset$, it follows that $\gen{\mathcal{S}}$ is an FC-family. Hence it suffices to consider the cases when $\left\{a,b,c\right\} \cap \left\{4,5,7,8\right\} \neq \emptyset$ in order to determine whether $\left\{a,b,c\right\} \cap [8] \neq \emptyset$ implies that $\gen{\mathcal{S}}$ is an FC-family.

Suppose w.l.o.g. that $a=4$ and consider $\mathcal{D}:=\left\{123,124,356, 4bc \right\}$. Suppose $6 \leq |U(\mathcal{D})|\leq 7$ and assume w.l.o.g. that $U(\mathcal{D})=[n]$ for each $6\leq n\leq 7$. From Theorem~\ref{FC(3,6)} and Corollary~\ref{fc(3,7)} we see that $FC(3,n)= 4$ for each $6\leq n\leq 7$. Hence it follows that $\gen{\mathcal{D}}$ is an FC-family, and therefore $\mathcal{S} \supset \mathcal{D}$ generates an FC-family. Suppose that $|U(\mathcal{D})|$ is maximal. Therefore, assume w.l.o.g $U(\mathcal{D})=[8]$, $b=7$ and $c=8$. Then $\mathcal{D}$ is isomorphic to $\left\{125, 346, 578, 678\right\}$\footnote{In cycle notation, we permute the ground set of $\mathcal{D}$ in the following way, $(18)(27)(36)(45)$, to arrive at (not in the same order) $\left\{125, 346, 578, 678\right\}$.}, and from Table~\ref{nfc(3,8)} we see that $\gen{\mathcal{D}}$ is an FC-family. Therefore $\mathcal{S} \supset \mathcal{D}$ generates an FC-family. 

Suppose $8\leq |U(\mathcal{S})| \leq 9$ and assume w.l.o.g. that $U(\mathcal{S})=[n]$ for each $8\leq n\leq 9$. From Proposition~\ref{FC(3,8)} and Corollary~\ref{FC(3,9)} we arrive at $FC(3,n)= 5$ for each $8\leq n\leq 9$. Therefore, it suffices to check whether the following values of $a,b,c$ lead to FC-families: 5,9,10 and 7,9,10 and 8,9,10. We note that for $a,b,c$ equal to 7,9,10 and 8,9,10, we arrive at isomorphic families of sets. Therefore we consider the following two cases:
\begin{itemize}
\item Suppose $\mathcal{S}=\left\{123,124,356,678, 59(10) \right\}$. Let $\mathcal{S}':= \left\{123, 356, 678, 59(10) \right\}$. Then since $|U(\mathcal{S}')|=9$, we assume w.l.o.g. that $U(\mathcal{S}')=[9]$ and arrive at $\left\{123, 345, 567, 489 \right\}$ which is isomorphic to $\left\{127, 348, 569, 789\right\}$\footnote{To arrive from  $\left\{127, 348, 569, 789\right\}$ to (not in the same order) $\left\{123, 345, 567, 489 \right\}$ we permute the ground set in cycle notation: $(1)(2)(3957)(48)(6)$.}. From Table~\ref{nfc(3,9)} we see that $\gen{\mathcal{S}'}$ is an FC-family, and therefore $\mathcal{S} \supset \mathcal{S}'$ generates an FC-family.
\item Suppose $\mathcal{S}=\left\{123,124,356,678, 79(10) \right\}$. Let $c \in \mathbb{Z}^{10}_{\geq 0}$ such that \\$c=(6,6,8,4,5,7,5,4,2,2)$. Then $X(\gen{\mathcal{S}},c)$ is empty, hence by Proposition~\ref{FranklIntProp} the UC family $\gen{\mathcal{S}}$ is an FC-family.

\end{itemize}

\end{proof}
 Proposition \ref{Vaugh} says that if we add any 3-set $\left\{a,b,c\right\}$ to $\mathcal{G}$ such that $\left\{a,b,c\right\} \cap [8] \neq \emptyset$, we arrive at an FC-family. Hence the next corollary follows immediately.
\begin{cor} \label{8coll}
Let $\mathcal{S}$ be a family of 3-sets. Suppose $\gen{\mathcal{S} \cup \mathcal{G}}$ is a Non\textendash FC-family. Then, for each $ S \in \mathcal{S}$ it follows that $S \cap [8] = \emptyset$.
\end{cor}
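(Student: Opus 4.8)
The plan is a one-step contrapositive argument combined with the (routine) monotonicity of the FC property, so the real content is already carried by Proposition~\ref{Vaugh}. Suppose, towards a contradiction, that some $S \in \mathcal{S}$ satisfies $S \cap [8] \neq \emptyset$. Since $S$ is a 3-set we may write $S = \{a,b,c\}$ with $a,b,c$ distinct positive integers, and the assumption $\{a,b,c\} \cap [8] \neq \emptyset$ is precisely the hypothesis of Proposition~\ref{Vaugh}. Applying that proposition yields that $\gen{\mathcal{G} \cup \{S\}}$ is an FC-family.

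Next I would record the elementary fact that the FC property is monotone under enlarging the generating family: if $\mathcal{A} \subseteq \mathcal{A}'$ and $\gen{\mathcal{A}}$ is an FC-family, then $\gen{\mathcal{A}'}$ is an FC-family. This follows straight from the definition: $\gen{\mathcal{A}}$, being the minimal UC family containing $\mathcal{A}$, is contained in every UC family that contains $\mathcal{A}$; in particular any UC family $\mathcal{F} \supseteq \gen{\mathcal{A}'} \supseteq \mathcal{A}' \supseteq \mathcal{A}$ contains $\gen{\mathcal{A}}$, hence satisfies Frankl's conjecture because $\gen{\mathcal{A}}$ is FC. Instantiating this with $\mathcal{A} := \mathcal{G} \cup \{S\} \subseteq \mathcal{S} \cup \mathcal{G} =: \mathcal{A}'$ (the inclusion holds because $S \in \mathcal{S}$) shows that $\gen{\mathcal{S} \cup \mathcal{G}}$ is an FC-family, contradicting the hypothesis that $\gen{\mathcal{S} \cup \mathcal{G}}$ is Non--FC. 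Therefore no $S \in \mathcal{S}$ can meet $[8]$, i.e. $S \cap [8] = \emptyset$ for every $S \in \mathcal{S}$, as claimed.

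There is essentially no obstacle here: the only thing to be mildly careful about is that passing from $\mathcal{G} \cup \{S\}$ to $\mathcal{S} \cup \mathcal{G}$ may enlarge the ground set $U(\cdot)$, but both the definition of an FC-family and the monotonicity statement only quantify over UC \emph{super}families, so this causes no difficulty. If one prefers to avoid mentioning generators altogether, the same proof works verbatim with ``$\mathcal{A}$ is an FC-family'' in place of ``$\gen{\mathcal{A}}$ is an FC-family'', since a family and the UC family it generates are FC (resp.\ Non--FC) simultaneously.
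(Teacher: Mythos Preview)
Your proof is correct and follows exactly the same approach as the paper: the paper simply observes that Proposition~\ref{Vaugh} says adding any 3-set meeting $[8]$ to $\mathcal{G}$ yields an FC-family and states that the corollary ``follows immediately.'' You have merely spelled out the monotonicity step that the paper leaves implicit.
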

Consider all nonisomorphic generators $\mathcal{S}$ with three 3-sets such that $U(\mathcal{S})=[6]$ and  $\gen{\mathcal{S}}$ is a Non\textendash FC-family. Let $\mathcal{I}:=\left\{123, 124, 356\right\} \subset \mathcal{P}([6])$. From the red entry in Table~\ref{table:nfc(3,6)}, we see that $\mathcal{I}$ is the only such family.
\begin{cor}\label{used}
Let $\mathcal{S}$ be a family of 3-sets. Define $\mathcal{T}:= \mathcal{S} \cup \mathcal{I}$. Suppose $\gen{\mathcal{T}}$ is a Non\textendash FC-family. Then $|\mathcal{T}_4| = 1$, and either $|\mathcal{T}_5| = 1$ or $|\mathcal{T}_6| = 1$.
\end{cor}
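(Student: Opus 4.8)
The overall plan is to argue by contraposition: to force each multiplicity bound on $\mathcal{T}$ I would assume it fails, extract a small subfamily $\mathcal{D}\subseteq\mathcal{T}$ whose generated UC family is already known to be FC, and then invoke monotonicity of the FC property (if $\mathcal{A}'\subseteq\mathcal{A}$ and $\gen{\mathcal{A}'}$ is an FC-family then so is $\gen{\mathcal{A}}$, since every UC family containing $\mathcal{A}$ contains $\mathcal{A}'$ and hence $\gen{\mathcal{A}'}$) to conclude that $\gen{\mathcal{T}}$ is FC, contradicting the hypothesis. The only inputs I would use are Theorem~\ref{common} (three 3-sets through a common element force FC), Theorem~\ref{lesscommon} (a copy of $\{135,236,456\}$ forces FC), the values $FC(3,5)=3$, $FC(3,6)=4$, $FC(3,7)=4$ (Theorems~\ref{FC(3,5)},~\ref{FC(3,6)}, Corollary~\ref{fc(3,7)}), and Tables~\ref{table:nfc(3,8)} and~\ref{table:nfc(3,9)}, which list all Non--FC generators of four 3-sets on $[8]$, respectively on $[9]$.

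\emph{First I would prove $|\mathcal{T}_4|=1$.} Since $124\in\mathcal{I}\subseteq\mathcal{T}$ we have $|\mathcal{T}_4|\ge1$, so assume $|\mathcal{T}_4|\ge2$ and pick $A\in\mathcal{T}_4\setminus\{124\}$. Then $\mathcal{D}:=\{123,124,356,A\}\subseteq\mathcal{T}$ with $4\in A$, so $\mathcal{D}$ consists of four distinct 3-sets with $6\le|U(\mathcal{D})|\le8$. If $|U(\mathcal{D})|\le7$ then $\gen{\mathcal{D}}$ is FC because $FC(3,6)=FC(3,7)=4$; if $|U(\mathcal{D})|=8$ then $\mathcal{D}$ relabels to $\{123,124,356,478\}$, which is isomorphic to $\{125,346,578,678\}$ (via $(18)(27)(36)(45)$) and hence FC by Table~\ref{table:nfc(3,8)}. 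This is precisely the ``$a=4$'' analysis already carried out inside the proof of Proposition~\ref{Vaugh}. Either way $\gen{\mathcal{T}}$ is FC, a contradiction, so $|\mathcal{T}_4|=1$.

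\emph{Next I would prove $|\mathcal{T}_5|=1$ or $|\mathcal{T}_6|=1$}, again by contradiction: assume $|\mathcal{T}_5|\ge2$ and $|\mathcal{T}_6|\ge2$. Theorem~\ref{common} applied at $5$ and at $6$ forces $|\mathcal{T}_5|=|\mathcal{T}_6|=2$, say $\mathcal{T}_5=\{356,B\}$ and $\mathcal{T}_6=\{356,C\}$ with $B,C\ne356$. If $B=C$, write $B=\{5,6,z\}$: $z\in\{1,2\}$ gives three 3-sets through $z$ (contradicting Theorem~\ref{common}), $z=4$ gives $|\mathcal{T}_4|\ge2$ (contradicting the previous step), $z=3$ is impossible, and $z\notin[6]$ makes $\{123,124,356,B\}$ four 3-sets on $7$ points (FC since $FC(3,7)=4$). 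If $B\ne C$ then $6\notin B$ and $5\notin C$ (else $B$ or $C$ would lie in $\mathcal{T}_6$ or $\mathcal{T}_5$), by the previous step $4\notin B$ and $4\notin C$, and $\{1,2,3\}\cap B=\{1,2,3\}\cap C=\emptyset$ (else some element of $[3]$ lies in three 3-sets of $\mathcal{T}$); hence $B\cap[6]=\{5\}$ and $C\cap[6]=\{6\}$, so $B$ and $C$ each have two elements outside $[6]$. I would then split on $|B\cap C|\in\{0,1,2\}$: if $|B\cap C|=2$ then $\{356,B,C\}$ lives on $5$ points and is FC since $FC(3,5)=3$; if $|B\cap C|=1$ then $\{356,B,C\}$ is a copy of $\{135,236,456\}$ and is FC by Theorem~\ref{lesscommon}; and if $B\cap C=\emptyset$ then $\{123,356,B,C\}$ is four distinct 3-sets on $9$ points forming a claw with centre $356$ (meeting $123,B,C$ in $\{3\},\{5\},\{6\}$ respectively, with $123,B,C$ pairwise disjoint), hence isomorphic to $\{127,348,569,789\}$ and FC by Table~\ref{table:nfc(3,9)}. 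In every case $\gen{\mathcal{T}}$ is FC, a contradiction, which finishes the proof.

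The delicate point is the case $B\cap C=\emptyset$: there the naive ``bad'' subfamily $\{123,124,356,B,C\}$ has five 3-sets on ten points and so lies outside the classified range, so one cannot conclude directly. The fix is to delete $124$ --- whose private element $4$ is a leaf of that subfamily --- leaving four 3-sets on nine points, and then to check that after relabelling this family is exactly one of the isomorphism classes certified FC in Table~\ref{table:nfc(3,9)}, namely the claw $\{127,348,569,789\}$; recognising that configuration (a $K_{1,3}$ whose centre has no private element and whose three leaves are pairwise disjoint) is the one small combinatorial verification the argument needs. I would also re-check that Proposition~\ref{Vaugh}'s ``$a=4$'' case really does cover every $\mathcal{D}=\{123,124,356,A\}$ with $4\in A$, $A\ne124$, since the first step leans on it.
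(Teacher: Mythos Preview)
Your argument is correct. For $|\mathcal{T}_4|=1$ you and the paper do the same thing, namely reduce to the ``$a=4$'' subfamily $\mathcal{D}=\{123,124,356,A\}$ analysed in the second paragraph of Proposition~\ref{Vaugh}. For the disjunction $|\mathcal{T}_5|=1$ or $|\mathcal{T}_6|=1$, however, you go well beyond what the paper writes. The paper, after assuming $|\mathcal{T}_6|=2$, passes (with overloaded notation) to the four-set subfamily $\mathcal{I}\cup\{\{6,b,c\}\}$ and then imposes ``$|\mathcal{T}_5|=2$'' on that subfamily, which forces $5\in\{b,c\}$; the subfamily then lives on at most seven points and $FC(3,6)=FC(3,7)=4$ finishes. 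That is exactly your $B=C$, $z\notin[6]$ subcase. Your version keeps the extra $5$-set $B$ and the extra $6$-set $C$ separate, distinguishes $B=C$ from $B\ne C$, and in the latter situation splits on $|B\cap C|$, invoking $FC(3,5)=3$, Theorem~\ref{lesscommon}, and the claw entry $\{127,348,569,789\}$ of Table~\ref{table:nfc(3,9)}. The disjoint case $B\cap C=\emptyset$ (two new sets, one through $5$ and one through $6$, giving five $3$-sets on ten points) is precisely the configuration the paper's written argument does not address, and your trick of dropping $124$ to land on four sets over nine points is the right move there. So your proof is more complete, at the price of importing Theorem~\ref{lesscommon} and Table~\ref{table:nfc(3,9)} as additional inputs that the paper's short argument for this corollary does not cite.
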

\begin{proof}
Suppose $\gen{\mathcal{T}}$ is a Non\textendash FC-family and $|\mathcal{T}_4| = 2$. Observe from the second paragraph of the proof of Proposition \ref{Vaugh} that $ \mathcal{I} \subset  \mathcal{D}$. Then it follows that $\mathcal{T}$ generates an FC-family and we arrive at a contradiction. Therefore $|\mathcal{T}_4| = 1$.

 Suppose $\gen{\mathcal{T}}$ is a Non\textendash FC-family and $|\mathcal{T}_6| = 2$. Let $\left\{6,b,c \right\}$ be a 3-set for distinct $b,c \in \mathbb{N}_{1}$. Suppose $\mathcal{T}=\mathcal{I} \cup \left\{\left\{6,b,c\right\}\right\}$. Suppose that $|\mathcal{T}_5| = 2$. Then $6\leq |U(\mathcal{T})| \leq 7$. Assume w.l.o.g. that $U(\mathcal{T})=[n]$ for each $6\leq n\leq 7$. From Theorem~\ref{FC(3,6)} and Corollary~\ref{fc(3,7)} we see that $FC(3,n)= 4$ for each $6\leq n\leq 7$. Hence it follows that $\gen{\mathcal{T}}$ is an FC-family, which is a contradiction. 

Therefore, either  $|\mathcal{T}_5| = 1$ or $|\mathcal{T}_6| = 1$.
\end{proof}
Consider all nonisomorphic generators $\mathcal{S}$ with four 3-sets such that $U(\mathcal{S})=[9]$ and  $\gen{\mathcal{S}}$ is a Non\textendash FC-family. They are the following families of 3-sets (red entries from Table~\ref{table:nfc(3,9)}): $\mathcal{J}:=\left\{123, 457, 689, 789\right\} \subset \mathcal{P}([9])$, $\mathcal{K}:=\left\{123, 124, 356, 789\right\} \subset \mathcal{P}([9])$, and $\mathcal{L}:=\left\{125, 345, 689, 789\right\} \subset \mathcal{P}([9])$.
\begin{lmm} \label{6lemma}
Let $\mathcal{S}$ be a family of 3-sets. Suppose $\gen{\mathcal{S}}$ is a Non\textendash FC-family such that $\left\{1,2,3\right\} \in \mathcal{S}$, $|\mathcal{S}_1|=|\mathcal{S}_2|=|\mathcal{S}_3| = 2$.  Then $\mathcal{S}$ contains a permutation of $\mathcal{I}$.
\end{lmm}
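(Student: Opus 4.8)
\emph{Proof plan.} The strategy is to cut $\mathcal{S}$ down to a four‑element subfamily and then dispatch the remaining possibilities using the already–known values of $FC(3,n)$ for $n\le 9$. Since $\{1,2,3\}\in\mathcal{S}$ and $|\mathcal{S}_1|=|\mathcal{S}_2|=|\mathcal{S}_3|=2$, each of $1,2,3$ lies in exactly one further $3$-set of $\mathcal{S}$; write $\mathcal{S}_i=\{\{1,2,3\},A_i\}$ for $i\in\{1,2,3\}$, so that $i\in A_i$ and $A_i\ne\{1,2,3\}$. Put $\mathcal{S}_0:=\{\{1,2,3\},A_1,A_2,A_3\}\subseteq\mathcal{S}$. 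The key reduction is that FC is monotone under taking subfamilies: if $\gen{\mathcal{S}_0}$ were FC then every UC family containing $\mathcal{S}$ would contain $\mathcal{S}_0$ and hence be Frankl‑good, contradicting that $\gen{\mathcal{S}}$ is Non--FC. So $\gen{\mathcal{S}_0}$ is Non--FC, and it suffices to find a permutation of $\mathcal{I}$ inside $\mathcal{S}_0$. Note also that $A_1,A_2,A_3$ cannot all coincide, since a $3$-set containing $\{1,2,3\}$ must equal $\{1,2,3\}$.

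\emph{Case 1: two of the $A_i$ coincide}, say $A_1=A_2$ (the hypotheses are symmetric in $1,2,3$). Then $A_1\supseteq\{1,2\}$ and $A_1\ne\{1,2,3\}$ force $A_1=A_2=\{1,2,x\}$ with $x\notin\{1,2,3\}$, while $A_3\ni 3$ is distinct from both; since $\mathcal{S}_1=\mathcal{S}_2=\{\{1,2,3\},\{1,2,x\}\}$ one gets $1,2\notin A_3$, i.e. $A_3=\{3,p,q\}$ with $p,q\notin\{1,2,3\}$. If $x\in\{p,q\}$ then $|U(\mathcal{S}_0)|=5$ and Theorem~\ref{FC(3,5)} makes $\gen{\mathcal{S}_0}$ FC, a contradiction; hence $x\notin\{p,q\}$ and $\mathcal{S}_0=\{\{1,2,3\},\{1,2,x\},\{3,p,q\}\}$ is, via the obvious relabelling $x\mapsto 4$, $p\mapsto 5$, $q\mapsto 6$, a permutation of $\mathcal{I}=\{123,124,356\}$. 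So $\mathcal{S}$ contains a permutation of $\mathcal{I}$.

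\emph{Case 2: $A_1,A_2,A_3$ are pairwise distinct.} Here I would first note that each $A_i$ meets $\{1,2,3\}$ in exactly $\{i\}$: if, say, $A_1\supseteq\{1,2\}$, then $A_1\in\mathcal{S}_2=\{\{1,2,3\},A_2\}$ with $A_1\ne\{1,2,3\}$ forces $A_1=A_2$. Hence $A_i=\{i\}\cup P_i$ with $P_i$ a $2$-set avoiding $\{1,2,3\}$, $\mathcal{S}_0$ consists of four distinct $3$-sets, and $v:=|U(\mathcal{S}_0)|=3+|P_1\cup P_2\cup P_3|\in\{5,\dots,9\}$. For $i\ne j$ one has $A_i\cap A_j=P_i\cap P_j$, so if two members of $\mathcal{S}_0$ shared two elements then $P_i=P_j$ and therefore $v\le 7$. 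Now split on $v$: if $5\le v\le 7$ then $\gen{\mathcal{S}_0}$ is FC because it is generated by $4\ge FC(3,v)$ $3$-sets whose union has at most $7$ elements (Theorems~\ref{FC(3,5)},~\ref{FC(3,6)} and Corollary~\ref{fc(3,7)}) — contradiction; and if $v\in\{8,9\}$ then by the previous sentence no two members of $\mathcal{S}_0$ share two elements, whereas every Non--FC family of four $3$-sets on $[8]$ is isomorphic to $\mathcal{G}$ or $\mathcal{H}$ (the red entries of Table~\ref{table:nfc(3,8)}) and every Non--FC family of four $3$-sets on $[9]$ is isomorphic to $\mathcal{J}$, $\mathcal{K}$ or $\mathcal{L}$ (the red entries of Table~\ref{table:nfc(3,9)}), each of which contains two members sharing exactly two elements (namely $123,124$ in $\mathcal{G},\mathcal{H},\mathcal{K}$ and $689,789$ in $\mathcal{J},\mathcal{L}$). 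Since that property is isomorphism invariant, $\gen{\mathcal{S}_0}$ cannot be Non--FC — contradiction. Thus Case 2 never occurs, and Case 1 supplies the required permutation of $\mathcal{I}$.

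\emph{Main obstacle.} Everything outside the $v\in\{8,9\}$ subcase of Case 2 is bookkeeping with the structure of $\mathcal{I}$ and the values $FC(3,n)$, $n\le 7$. The one genuinely content‑bearing step is that subcase: it needs the full classification of Non--FC four‑set generators on $[8]$ and $[9]$ (Tables~\ref{table:nfc(3,8)} and~\ref{table:nfc(3,9)}), together with the small structural remark that, in the distinct case, a two‑element overlap between two members of $\mathcal{S}_0$ can occur only when $|U(\mathcal{S}_0)|\le 7$ — which is exactly what forces $\mathcal{S}_0$ to be unlike every listed Non--FC family and hence forces Case 2 to be vacuous.
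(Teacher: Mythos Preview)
Your argument is correct and follows the paper's overall strategy: the same two-case split (does some other member of $\mathcal{S}$ contain two of $1,2,3$?), the same three-set analysis in Case~1, and in Case~2 the same reduction to a four-set subfamily checked against the classification of Non--FC generators on $[8]$ and $[9]$ (Tables~\ref{table:nfc(3,8)} and~\ref{table:nfc(3,9)}).

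The one genuine difference is the endgame of Case~2. The paper uses the invariant ``$\mathcal{D}$ contains a $3$-set all of whose elements lie in exactly two members'' and asserts that among $\mathcal{J},\mathcal{K},\mathcal{L}$ only $\mathcal{K}$ has this property, hence $\mathcal{D}\cong\mathcal{K}\supset\mathcal{I}$. You instead observe that when $v\ge 8$ no two members of $\mathcal{S}_0$ share two elements, whereas each of $\mathcal{G},\mathcal{H},\mathcal{J},\mathcal{K},\mathcal{L}$ visibly contains such a pair; so $\mathcal{S}_0$ matches none of the Non--FC types and Case~2 is simply vacuous. Your invariant is simpler to check and yields a cleaner conclusion; it also quietly sidesteps a small imprecision in the paper's step (the set $\{7,8,9\}\in\mathcal{J}$ also has all three elements of degree $2$, so $\mathcal{K}$ is not unique with that property, though $\mathcal{J}$ likewise contains a copy of $\mathcal{I}$ and in fact $\mathcal{D}$ cannot be isomorphic to any of the five, exactly as you show).
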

\begin{proof}
Suppose $\left\{1, 2, 3\right\} \in \mathcal{S}$ and $\gen{\mathcal{S}}$ is a Non\textendash FC-family such that $|\mathcal{S}_1|=|\mathcal{S}_2|=|\mathcal{S}_3| = 2$. Let $\left\{1, 2, a\right\}$ be a 3-set such that $a \in \mathbb{N}_{1}$. Suppose $\left\{1, 2, a\right\}\in \mathcal{S}$. Since $|S_3| = 2$, we may assume $\mathcal{S}$ contains $\mathcal{D}:= \left\{123,12a,bc3\right\}$, for distinct $b,c \in \mathbb{N}_{1}$. Suppose $4\leq|U(\mathcal{D})|\leq 5$, and assume w.l.o.g. $U(\mathcal{D})=[n]$ for each $4\leq n \leq 5$. Then, Theorem~\ref{FC(3,4)} and Theorem~\ref{FC(3,5)} imply that $\gen{\mathcal{D}}$ is an FC-family. Therefore $\mathcal{S} \supset \mathcal{D}$ generates an FC-family, which is a contradiction. Hence $|U(\mathcal{D})|=6$, and assume w.l.o.g. that $U(\mathcal{D})=[6]$. Under permutations of the ground set, this implies that $\mathcal{D}$ is isomorphic to $\mathcal{I}$. Therefore if $\mathcal{S}$ contains any other set besides $\left\{1, 2, 3\right\}$ which contains any two of the elements 1, 2 and 3, then $\mathcal{S}$ contains some permutation of $\mathcal{I}$.

Suppose $\mathcal{S}$ does not contain another set besides $\left\{1, 2, 3\right\}$ with any two of the elements 1, 2 and 3. Then, we may assume $\mathcal{S}$ contains $\mathcal{D}:= \left\{123,145,2ab,3cd\right\}$ for distinct $a,b \in \mathbb{N}_{1}$ and distinct $c,d \in \mathbb{N}_{1}$. 

Suppose that $\left\{a,b,c,d\right\} \cap [5] \neq \emptyset$, and suppose $5\leq|U(\mathcal{D})|\leq 7$. Assume w.l.o.g. that $U(\mathcal{D})=[n]$ for each $5 \leq n \leq 7$. Then Theorem~\ref{FC(3,5)}, Theorem~\ref{FC(3,6)} and Corollary~\ref{fc(3,7)} imply that $\gen{\mathcal{D}}$ is an FC-family. Therefore $\mathcal{S} \supset \mathcal{D}$ generates an FC-family, which is a contradiction.

Suppose that $\left\{a,b,c,d\right\} \cap [5] \neq \emptyset$ and $|U(\mathcal{D})|= 8$. Assume w.l.o.g that $U(\mathcal{D})=[8]$. Then $\mathcal{D}$ is not isomorphic\footnote{This is easiest to see if we identify with each family a binary matrix where each column represents a set. It suffices to consider the square block structure in the upper left hand corner of the matrices corresponding to $\mathcal{H}$ and $\mathcal{G}$. The block structure cannot be recovered in $\mathcal{D}$ by an appropriate choice of $a,b,c,d$ without clearly implying that $\mathcal{D}$ is nonisomorphic to $\mathcal{H}$ and $\mathcal{G}$.} to either $\mathcal{H}$ or  $\mathcal{G}$, and hence $\gen{\mathcal{D}}$ is an FC-family. Therefore $\mathcal{S} \supset \mathcal{D}$ generates an FC-family, which is a contradiction. 

It follows that $\left\{a,b,c,d\right\} \cap [5] = \emptyset$ and hence $|\mathcal{D}|=9$. Assume w.l.o.g. that $U(\mathcal{D})=[9]$. Examining  $\mathcal{J}$,  $\mathcal{K}$, and  $\mathcal{L}$ we see that the only family which contains a 3-set $\left\{a,b,c\right\}$ for distinct $a,b,c \in \mathbb{N}_{1}$ such that $a,b$ and $c$ are in exactly 2 sets is $\mathcal{K}$, and $\mathcal{I}$ is contained in $\mathcal{K}$. 
\end{proof}
\begin{thrm}\label{bigvaughan}
NFC(3,n) $\leq n/2$ for all $n \geq 4$.
\end{thrm}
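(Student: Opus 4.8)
The plan is an induction on $n$. For $4 \le n \le 9$ the bound is precisely the content of Corollaries~\ref{nfc(3,4)}, \ref{nfc(3,5)}, \ref{nfc(3,6)}, \ref{nfc(3,7)}, \ref{nfc(3,8)}, \ref{nfc(3,9)}, so these serve as base cases. For the inductive step I fix $n \ge 10$, assume the bound for every smaller ground set, and argue by contradiction: suppose there is a family $\mathcal{S}$ of 3-sets with $U(\mathcal{S}) = [n]$, $\gen{\mathcal{S}}$ Non--FC, and $|\mathcal{S}| = NFC(3,n) \ge \lfloor n/2 \rfloor + 1$, so in particular $|\mathcal{S}| > n/2$. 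Throughout I use the routine fact that deleting sets from a Non--FC generator leaves a Non--FC generator: if $\mathcal{A} \subseteq \mathcal{B}$ and $\gen{\mathcal{B}}$ is Non--FC then so is $\gen{\mathcal{A}}$ (a bad UC family containing $\gen{\mathcal{B}}$ also contains $\gen{\mathcal{A}}$).

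The first step is to locate a copy of $\mathcal{I} = \{123,124,356\}$ inside $\mathcal{S}$ by a counting argument. By Theorem~\ref{common} no element lies in three sets of $\mathcal{S}$, so with $a$ the number of elements lying in exactly two sets one has $3|\mathcal{S}| = a + n$, whence $a = 3|\mathcal{S}| - n > |\mathcal{S}|$ because $|\mathcal{S}| > n/2$. Counting incidences, $\sum_{S \in \mathcal{S}} |\{i \in S : |\mathcal{S}_i| = 2\}| = 2a > 2|\mathcal{S}|$, so some set of $\mathcal{S}$ has all three of its elements of degree two; relabel it $\{1,2,3\}$. Lemma~\ref{6lemma} now yields a permutation of $\mathcal{I}$ in $\mathcal{S}$, and after relabeling I take $\mathcal{I} = \{123,124,356\} \subseteq \mathcal{S}$. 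Then Theorem~\ref{common} forces $|\mathcal{S}_1| = |\mathcal{S}_2| = |\mathcal{S}_3| = 2$ (each of $1,2,3$ already lies in two sets of $\mathcal{I}$), and Corollary~\ref{used}, applied to $\mathcal{S}$ itself since $\mathcal{I} \subseteq \mathcal{S}$, gives $|\mathcal{S}_4| = 1$ together with $|\mathcal{S}_5| = 1$ or $|\mathcal{S}_6| = 1$; swapping the labels $5,6$ if necessary (which preserves $\mathcal{I}$) I arrange $|\mathcal{S}_5| = 1$.

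The heart of the argument is then a case split on $|\mathcal{S}_6|$, arranged so that the reduction discards exactly two ground elements per deleted set. If $|\mathcal{S}_6| = 1$, then $1,\dots,6$ occur only in the sets of $\mathcal{I}$, so $\mathcal{S}$ is the disjoint union of $\mathcal{I}$ and a Non--FC family $\mathcal{S}''$ of 3-sets with $U(\mathcal{S}'') = [n] \setminus [6]$; the inductive hypothesis gives $|\mathcal{S}''| \le \lfloor (n-6)/2 \rfloor$, so $|\mathcal{S}| \le \lfloor (n-6)/2 \rfloor + 3 = \lfloor n/2 \rfloor$, contradicting $|\mathcal{S}| \ge \lfloor n/2 \rfloor + 1$. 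If $|\mathcal{S}_6| = 2$, let $R$ be the second set through $6$; its two other elements cannot be $1,2,3$ (already of degree two inside $\mathcal{I}$) or $4,5$ (of degree one), so $R = \{6,p,q\}$ with $p,q \ge 7$, and then $\mathcal{I} \cup \{R\}$ is isomorphic to $\mathcal{G} = \{123,124,356,678\}$. Corollary~\ref{8coll} now tells us that every set of $\mathcal{S} \setminus (\mathcal{I}\cup\{R\})$ avoids the $8$-element set $U(\mathcal{I}\cup\{R\})$, so $\mathcal{S}$ is the disjoint union of $\mathcal{I} \cup \{R\}$ with a Non--FC family of 3-sets on a ground set of size $n-8$; the inductive hypothesis (the small residual cases $n-8 \le 3$ handled directly, and the case $n=10$ being vacuous here because a nonempty family of 3-sets cannot have a $2$-element union) gives $|\mathcal{S}| \le \lfloor (n-8)/2 \rfloor + 4 = \lfloor n/2 \rfloor$, again a contradiction. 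This completes the induction; combined with Theorem~\ref{lower} it yields $FC(3,n) = \lfloor n/2 \rfloor + 1$, i.e. Conjecture~\ref{3sets}.

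I expect the delicate point to be exactly the $|\mathcal{S}_6| = 2$ branch. A naive reduction that simply deleted the three sets of $\mathcal{I}$ would let element $6$ ``leak out'' and discard three sets while discarding only $\le 3$ elements, leaving the bound short by one --- a loss that is most painful for odd $n$, where the induction has no slack to absorb it. What rescues the argument is that Corollaries~\ref{used} and \ref{8coll} are strong enough to (i) pin down $|\mathcal{S}_4|$, $|\mathcal{S}_5|$, $|\mathcal{S}_6|$, and (ii) in the leaking case promote $\mathcal{I}$ to a full copy of $\mathcal{G}$ whose $8$-element support is entirely disjoint from the rest of $\mathcal{S}$, so that deleting four sets removes exactly eight elements. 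A secondary thing to verify with care is that every residual ground set that can arise --- $n-6 \ge 4$ in the first case, $n-8 \in \{3\}$ or $n-8 \ge 4$ in the second --- is covered either by the base cases or by a trivial direct argument (a family of 3-sets with a $3$-element union is a single set).
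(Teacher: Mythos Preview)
Your proof is correct and rests on exactly the same auxiliary results as the paper's (Theorem~\ref{common}, Lemma~\ref{6lemma}, Corollary~\ref{used}, Corollary~\ref{8coll}), with the same induction and the same mechanism of locating a copy of $\mathcal{I}$ (and, when $|\mathcal{S}_6|=2$, of $\mathcal{G}$) inside $\mathcal{S}$. The organization of the reduction differs, however. The paper argues directly rather than by contradiction: it first disposes of the easy case $b\ge k$ by pure counting, and in the remaining case splits on the \emph{parity} of $n$---for even $n$ it deletes only $\{1,2,3\}$ and $\{1,2,4\}$ (two sets, three elements) and recovers the bound from integrality of $k$, while for odd $n$ it deletes a single set ($\{3,5,6\}$ or $\{6,7,8\}$ depending on $|\mathcal{S}_6|$) so that exactly two elements drop out. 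You instead assume $|\mathcal{S}|>n/2$ from the start (which places you automatically in the $b<k$ regime), skip the parity split entirely, and always delete the whole block $\mathcal{I}$ or $\mathcal{I}\cup\{R\}\cong\mathcal{G}$; this keeps the elements-per-set ratio at exactly $2$ regardless of parity, at the price of having to handle residual ground sets of size $2$ or $3$ by hand, which you do correctly. The two arguments are thus minor rearrangements of one another: yours avoids the even/odd bookkeeping, the paper's avoids the small residual cases.
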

\begin{proof}
We proceed by induction. We have shown the statement to be true for $4 \leq n \leq 9$. We assume it's true for all positive integers up to and including $n -1$, and show that it holds for $n$. Suppose $\mathcal{S}$ is a family of 3-sets such that $\gen{\mathcal{S}}$ is a Non\textendash FC-family and $U(\mathcal{S})=[n]$. Suppose $|\mathcal{S}|=k$. Let $a$ be the number of distinct positive integers $i\in [n]$ such that $|\mathcal{S}_i|= 2$, and $b$ the number of distinct positive integers $i\in [n]$ such that $|\mathcal{S}_i| = 1$. Theorem \ref{common} implies that $|\mathcal{S}_i|$ can only equal one or two, and we arrive at $a+b=n$ and $2a +b = 3k$. It follows that $a=3k -n$ and $b = 2n - 3k$. If $b \geq k$, we arrive at $b = 2n - 3k \geq k$, which implies that $k \leq n/2$. Suppose $b < k$. This implies there are more 3-sets than distinct positive integers $i$ such that $|\mathcal{S}_i|=1$. Therefore, if we removed all the 3-sets such that $|\mathcal{S}_i|=1$, we would be left with at least one 3-set such that all its elements appear exactly twice. We can safely assume this is the set $\left\{1,2,3\right\}$, and $|\mathcal{S}_1|=|\mathcal{S}_2|=|\mathcal{S}_3|=2$. By Lemma \ref{6lemma}, we may safely assume that $\mathcal{S}$ contains $\mathcal{I}$ and by Corollary \ref{used} we assume w.l.o.g. that $|\mathcal{S}_4|= |\mathcal{S}_5|= 1$. We now consider the parity of $n$.
\begin{enumerate}
\item Suppose $n$ is even. Let $\mathcal{T}:= (\mathcal{S} \setminus \left\{\left\{1,2,3\right\}\right\}) \setminus \left\{\left\{1,2,4\right\}\right\}$. Since $|\mathcal{S}_4|= 1$ and $|\mathcal{S}_1|=|\mathcal{S}_2|=|\mathcal{S}_3|=2$, it follows that $|U(\mathcal{T})| = n -3$. Assume w.l.o.g. that $U(\mathcal{T})=[n-3]$. Since $|\mathcal{T}|=k-2$, we arrive at $k-2 \leq NFC(3,n-3)$. Using our induction hypothesis we arrive at $k-2 \leq (n-3)/2$, which implies that $k \leq (n+1)/2$ and it follows that $k \leq n/2$, since $k$ is an integer and $n$ is even.
\item Suppose $n$ is odd. We consider the following two cases:
\begin{itemize}
\item $|\mathcal{S}_6|=1$. Let $\mathcal{T}:= \mathcal{S} \setminus \left\{\left\{3,5,6\right\}\right\}$. As previously $|U(\mathcal{T})| = n-2$, $|\mathcal{T}|=k-1$. Assume w.l.o.g that $U(\mathcal{T})=[n-2]$ and by applying the induction hypothesis we arrive at $k-1 \leq (n-2)/2$, which implies that $k \leq n/2$.
\item $|\mathcal{S}_6|=2$. Since $\mathcal{S}$ contains $\mathcal{I}$ and $|\mathcal{S}_1|=|\mathcal{S}_2|=|\mathcal{S}_3|=2$, $|\mathcal{S}_4|=|\mathcal{S}_5|=1$, then we may safely assume $\mathcal{S}$ contains $\mathcal{G}$, up to isomorphism. By Corollary \ref{8coll}, we see that $|\mathcal{S}_7|=|\mathcal{S}_8|=1$. Let $\mathcal{T}:= \mathcal{S} \setminus \left\{\left\{6,7,8\right\}\right\}$, and as in the previous case we arrive at 
$k \leq n/2$.
\end{itemize}
\end{enumerate} 
\end{proof}
\begin{cor}\label{3-setsproblem}
Let $\mathcal{T}$ be a family of 3-sets such that $|U(\mathcal{T})|=n \geq 4$. Suppose $|\mathcal{T}|\geq \floor{\frac{n}{2}} +1$. Then any UC family $\mathcal{F}$ such that $\mathcal{F} \supset \mathcal{T}$ satisfies Frankl's conjecture.	
\end{cor}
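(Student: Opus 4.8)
The plan is to deduce the statement directly from Theorem~\ref{bigvaughan}, which is where all of the real work sits; the corollary itself is a short bookkeeping argument. First I would reduce the claim about arbitrary UC families $\mathcal{F} \supset \mathcal{T}$ to a claim about the single family $\gen{\mathcal{T}}$. By the definition of a generator, every UC family $\mathcal{F}$ with $\mathcal{F} \supset \mathcal{T}$ satisfies $\mathcal{F} \supseteq \gen{\mathcal{T}}$ (the minimal UC family containing $\mathcal{T}$ is the intersection of all UC families containing $\mathcal{T}$). Hence, once we know $\gen{\mathcal{T}}$ is an FC-family, the definition of Frankl-completeness immediately gives that $\mathcal{F}$ satisfies Frankl's conjecture. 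We may also assume w.l.o.g. that $U(\mathcal{T}) = [n]$.

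Next I would convert Theorem~\ref{bigvaughan} into a bound on $FC(3,n)$. Using the identity $FC(3,n) - 1 = NFC(3,n)$ (which holds by Theorem~\ref{lower}, as noted just after Definition~\ref{collection}) together with $NFC(3,n) \leq n/2$, we obtain $FC(3,n) \leq n/2 + 1$; since $FC(3,n)$ is an integer this sharpens to $FC(3,n) \leq \floor{n/2} + 1$. (Combined with the lower bound $\floor{n/2}+1 \leq FC(3,n)$ of Theorem~\ref{lower}, this in fact establishes Conjecture~\ref{3sets}, but only the upper bound is needed here.)

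Finally I would invoke the hypothesis $|\mathcal{T}| \geq \floor{n/2}+1$. By the previous paragraph $|\mathcal{T}| \geq FC(3,n)$, and by the definition of $FC(3,n)$ every subfamily consisting of exactly $FC(3,n)$ of the 3-sets in $[n]$ generates an FC-family. Since the FC-property is preserved under enlarging the generating family of 3-sets --- if $\mathcal{S} \subseteq \mathcal{S}'$ and $\gen{\mathcal{S}}$ is an FC-family, then so is $\gen{\mathcal{S}'}$, because $\gen{\mathcal{S}} \subseteq \gen{\mathcal{S}'}$ and every UC family containing $\gen{\mathcal{S}'}$ also contains $\gen{\mathcal{S}}$ --- picking any $FC(3,n)$-element subfamily of $\mathcal{T}$ shows that $\gen{\mathcal{T}}$ is an FC-family, which completes the argument.

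There is no genuine obstacle here; the content lies entirely in Theorem~\ref{bigvaughan}. The only points that require a moment's care are the passage from $n/2$ to $\floor{n/2}$ via integrality, the observation that the 3-sets of $\mathcal{T}$ all persist in its minimal generator --- so that the counting in Definition~\ref{collection} and in the definition of $FC(3,n)$ applies verbatim, which holds since the union of two distinct 3-sets has cardinality at least $4$ --- and the monotonicity of the FC-property under adding 3-sets, which is what lets us handle the case where $|\mathcal{T}|$ strictly exceeds $\floor{n/2}+1$.
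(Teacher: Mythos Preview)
Your proposal is correct and follows essentially the same route as the paper, which simply records that the corollary ``follows directly from Theorem~\ref{bigvaughan} and Definition~\ref{collection}.'' You spell out the bookkeeping the paper leaves implicit---passing from $NFC(3,n)\le n/2$ to $FC(3,n)\le\lfloor n/2\rfloor+1$ via integrality and the relation $FC(3,n)-1=NFC(3,n)$, and noting the monotonicity of the FC property under adding 3-sets---but there is no substantive difference in strategy.
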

\begin{proof}
Follows directly from Theorem~\ref{bigvaughan} and Definition~\ref{collection}. 
\end{proof}
\begin{cor}
$FC(3,n) = \floor{\frac{n}{2}} + 1 $ for all $n \geq 4$.
\end{cor}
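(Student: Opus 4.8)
The plan is to package the two bounds already in hand. First I would recall the bookkeeping identity noted immediately after Definition~\ref{collection}: once Theorem~\ref{lower} guarantees that $NFC(3,n)$ is well-defined, one has $FC(3,n) - 1 = NFC(3,n)$ for every $n \geq 4$, since a set of $FC(3,n)-1$ many $3$-sets whose generated family is Non--FC is exactly what $NFC(3,n)$ counts, and the minimality in Definition~\ref{collection} matches the definition of $FC(3,n)$. So it suffices to pin down $NFC(3,n)$.

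Next I would invoke Theorem~\ref{bigvaughan}, which gives $NFC(3,n) \leq n/2$. Because $NFC(3,n)$ is an integer, this rounds down to $NFC(3,n) \leq \floor{n/2}$, and hence $FC(3,n) = NFC(3,n) + 1 \leq \floor{n/2} + 1$. Finally, Theorem~\ref{lower} (Morris) supplies the matching inequality $\floor{n/2} + 1 \leq FC(3,n)$. The two bounds together force $FC(3,n) = \floor{n/2} + 1$ for all $n \geq 4$, which is Conjecture~\ref{3sets}.

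I do not expect any real obstacle at this step: all of the genuine difficulty has been absorbed into Theorem~\ref{bigvaughan}, whose induction rests on the full classification of $\mathcal{T}(3,n)$ for $4 \leq n \leq 9$ (Corollaries~\ref{nfc(3,4)} through~\ref{nfc(3,9)} with their tables, verified via Algorithm~\ref{row generation}) together with the structural reductions Lemma~\ref{6lemma}, Corollary~\ref{used}, and Corollary~\ref{8coll}. The only points needing a moment's care in the corollary itself are the integrality rounding $NFC(3,n)\leq n/2 \Rightarrow NFC(3,n)\leq\floor{n/2}$ and the identity $FC(3,n) = NFC(3,n)+1$, both of which are routine.
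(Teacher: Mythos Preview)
Your proof is correct and follows essentially the same route as the paper. The paper phrases the upper bound via Corollary~\ref{3-setsproblem} (itself an immediate consequence of Theorem~\ref{bigvaughan}) and then invokes Theorem~\ref{lower}, whereas you go directly through the identity $FC(3,n)=NFC(3,n)+1$ and the integrality rounding; these are the same argument in slightly different packaging.
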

\begin{proof}
Follows directly from Corollary~\ref{3-setsproblem} and Theorem~\ref{lower}. 
\end{proof}
\bibliographystyle{plain}
  \bibliography{PulajCuttingPlanes}
\appendix
\section{Appendix}
Next, we briefly outline how to verify the output of a computer algebra system for isomorphism classes of generators for FC-families. We note, that this is possible to do by hand only when the number of classes and the ground set is small. For nontrivial fixed $n,m,k$ we want to partition all families $\mathcal{S}$ of $k$-sets such that $|\mathcal{S}|=m$, $U(\mathcal{S})=[n]$ according to their isomorphism class, and then display a representative from each. Thus in order to ensure that the output of a computer algebra system is correct, it suffices to ensure that the representative families of generators are pairwise nonisomorphic and that by adding the cardinalities of each of the isomorphism classes we recover the cardinality of the collection of all families $\mathcal{S}$ as above. This can be done by standard counting techniques and is easy but  lengthy.

In the next two tables, we show that all nonisomorphic families of 3-sets constructed from families of sets isomorphic to $\mathcal{G}$ and $\mathcal{H}$ (as needed for the proof of Corollary~\ref{FC(3,9)} by adding an appropriate 3-set yield generators for FC-families. The rest of the tables give nonisomorphic minimal generators of previously unknown FC-families.
\\
\begin{table}[h!]
\begin{center}
\scalebox{0.85}{
\begin{tabular}{|l|l|}
\hline
\multicolumn{2}{|c|}{Nonisomorphic generators $\left\{124, 346, 578, 678\right\} \cup \left\{A \cup \left\{9\right\}\right\}$ for each $A \in \mathcal{P}([8])$ with $|A| = 2$}\\\hline
\hline
124, 346, 578, 678, 789 & \text{generates FC-family by Theorem \ref{common}}\\\hline
124, 346, 578, 678, 589 & \text{generates FC-family by Theorem \ref{common}}\\\hline
124, 346, 578, 678, 459 & \text{generates FC-family by Theorem \ref{common}}\\\hline
 124, 346, 578, 678, 489  & \text{generates FC-family by Theorem \ref{common}}\\\hline
 124, 346, 578, 678, 369  & \text{generates FC-family by Theorem \ref{common}}\\\hline
 124, 346, 578, 678, 349  & \text{generates FC-family by Theorem \ref{common}}\\\hline
 124, 346, 578, 678, 269  & \text{generates FC-family by Theorem \ref{common}}\\\hline
 124, 346, 578, 678, 249  & \text{generates FC-family by Theorem \ref{common}} \\\hline
 124, 346, 578, 678, 689  & \text{generates FC-family by Theorem \ref{common}}\\\hline
 124, 346, 578, 678, 569  & \text{generates FC-family by Theorem \ref{common}} \\\hline
 124, 346, 578, 678, 469  & \text{generates FC-family by Theorem \ref{common}}\\\hline
 124, 346, 578, 678, 389  & \text{generates FC-family by Theorem \ref{common}}  \\\hline
124, 346, 578, 678, 289  & \text{generates FC-family by Theorem \ref{common}}  \\\hline
 124, 346, 578, 678, 239  & \text{$ 124, 346, 239 $ generates FC-family by Theorem \ref{lesscommon}} \\\hline
 124, 346, 578, 678, 359  & \text{since} $|U(\left\{346, 578, 678, 359\right\})|=7$  \text{then FC-family by Corollary~\ref{fc(3,7)}}\\\hline
 124, 346, 578, 678, 259  & \text{$ 346, 578, 678, 259 $ is isomorphic to $ 125, 346, 578, 678 $ in Table~\ref{nfc(3,8)}} \\\hline
124, 346, 578, 678, 129  & \scriptsize{ $ 1  \mapsto 2$, $ 2  \mapsto 2$, $ 3  \mapsto 2$, $ 4  \mapsto 3$, $ 5  \mapsto 1$, $ 6  \mapsto 3$, $ 7  \mapsto 2$, $ 8  \mapsto 2$, $ 9  \mapsto 1$} \\\hline
\end{tabular}
}
\caption{Proof of Corollary~\ref{FC(3,9)} where $\left\{124, 346, 578, 678\right\}$ is isomorphic to $\mathcal{G}$}
\end{center}
\end{table}
\begin{table}[h!]
\begin{center}
\scalebox{0.85}{
\begin{tabular}{|l|l|}
\hline
\multicolumn{2}{|c|}{Nonisomorphic generators $\left\{134, 234, 578, 678\right\}\cup \left\{A \cup \left\{9\right\}\right\}$ for each $A \in \mathcal{P}([8])$ with $|A| = 2$}\\\hline
\hline
 134, 234, 578, 678, 789  &  \text{generates FC family by Theorem \ref{common}} \\\hline
 134, 234, 578, 678, 689  & \text{generates FC family by Theorem \ref{common}} \\\hline
 134, 234, 578, 678, 489  & \text{generates FC family by Theorem \ref{common}}\\\hline
 134, 234, 578, 678, 469  & \text{generates FC-family by Theorem \ref{common}}  \\\hline
 134, 234, 578, 678, 569  & \text{since}  $|U(\left\{578, 678, 569\right\})|=5$ \text{then FC-family by Theorem~\ref{FC(3,5)}}\\\hline
 134, 234, 578, 678, 269  & \scriptsize{ $ 1  \mapsto 1$, $ 2  \mapsto 3$, $ 3  \mapsto 2$, $ 4  \mapsto 2$, $ 5  \mapsto 1$, $ 6  \mapsto 3$, $ 7  \mapsto 2$, $ 8  \mapsto 2$, $ 9  \mapsto 2$} \\\hline
\end{tabular}
}
\caption{Proof of Corollary~\ref{FC(3,9)} where $\left\{134, 234, 578, 678\right\}$ is isomorphic to $\mathcal{H}$ }
\end{center}
\end{table}

\end{document}